\documentclass[11pt]{amsart}
\usepackage{amssymb}
\usepackage{graphicx}
\usepackage{subfigure}
\usepackage[cmtip,all]{xy}
\usepackage{amscd}
\usepackage{psfrag}

\newtheorem{theorem}{Theorem}[section]
\newtheorem{lemma}[theorem]{Lemma}
\newtheorem{corollary}[theorem]{Corollary}
\newtheorem{proposition}[theorem]{Proposition}

\theoremstyle{definition}

\newtheorem{remark}[theorem]{Remark}




\numberwithin{equation}{section}

\newcommand\boxt[1]{\boxed{\text{\rm #1}\index{boxt}}}
\def\boxit#1{\vbox{\hrule\hbox{\vrule\kern3pt
     \vbox{\kern3pt#1\kern3pt}\kern3pt\vrule}\hrule}}


\newcommand\on[1]{{\rm #1}}
\newcommand\csch{\text{\rm{csch}}}
\newcommand\sech{\text{\rm{sech}}}

\newcommand\x{\times}
\newcommand\rx{\rtimes}

\newcommand\ps{{'\kern-2pt S}}
\newcommand\lpr{{'\kern-2pt}}
\newcommand\lppr{{\kern1pt''\kern-2pt}}
\newcommand\lpe{{\kern1pt '\kern-2pt e}}
\newcommand\lppe{{\kern1pt ''\kern-2pt e}}
\newcommand\lpE{{\kern1pt '\kern-2pt E}}
\newcommand\lppE{{\kern1pt ''\kern-2pt E}}
\newcommand\bs{{\backslash}}
\newcommand\ra{\rightarrow}

\newcommand\lra{\longrightarrow}

\newcommand\bbh{{\mathbb H}}

\newcommand\bbr{{\mathbb R}}

\newcommand\scirc{\kern -2pt\circ\kern -2pt} 

\newcommand\proj{\on{proj}}

\newcommand\isom{\on{Isom}}

\newcommand\gl{\on{GL}}

\newcommand\so{\on{SO}}

\newcommand\sltr{\on{SL}(2,\mathbb R)}

\newcommand\inv{^{-1}}

\newcommand\Pf{\on{P}\kern-2pt f} 

  {\begin{list}{}{%
     \settowidth{\labelwidth}{\textsf{#1}}%
     \setlength{\leftmargin}\labelwidth
     \advance\leftmargin+\labelsep}}
  {\end{list}}

\newcommand{\kbmar}[1]
  {{\kern-4pt$^\spadesuit$}\marginpar{\rightline{\boxed{\text{#1}}}}}

\def\bi{\mathbf i}
\def\bn{\mathbf n}
\def\bx{\mathbf x}

\def\boxit#1{\vbox{\hrule\hbox{\vrule\kern3pt
     \vbox{\kern3pt#1\kern3pt}\kern3pt\vrule}\hrule}}

\def\boxit#1{\vbox{\hrule\hbox{\vrule\kern3pt
     \vbox{\kern3pt#1\kern3pt}\kern3pt\vrule}\hrule}}

\def\rmk#1{\relax}

\def\multilimits@{\bgroup
  \Let@
  \restore@math@cr
  \default@tag
 \baselineskip\fontdimen10 \scriptfont\tw@
 \advance\baselineskip\fontdimen12 \scriptfont\tw@
 \lineskip\thr@@\fontdimen8 \scriptfont\thr@@
 \lineskiplimit\lineskip
 \vbox\bgroup\ialign\bgroup\hfil$\m@th\scriptstyle{##}$\hfil\crcr}
\def\endSb{\crcr\egroup\egroup\egroup}

\def\angles#1{\langle{#1}\rangle}

\def\phm{\phantom{-}}

\def\lprS{{\kern2pt '{\kern-2pt}S}}
\def\ddanger{\kern-4pt ${^{^\clubsuit}}$ \marginpar{\centerline{$^{\sqrt{}}$}}}
\def\danger#1{\kern-4pt ${^{^\clubsuit}}$ \marginpar{#1}}
\def\dangers#1{\kern-4pt ${^{^\clubsuit}}$ \marginpar{\boxt{{#1}}}}

\def\bzero{\mathbf 0}

\def\sono{\so_0(n,1)}
\def\soto{\so_0(2,1)}
\def\asoto{\frak{so}(2,1)}
\def\son{\so(n)}
\def\sot{\so(2)}

\def\SP{\mathcal H}
\def\HY{\mathbb  H}
\def\bv{\mathbf v}
\def\bw{\mathbf w}
\def\bu{\mathbf u}
\def\zh{{\hat z}}

\def\shorteq{\kern-4pt=\kern-4pt}

\begin{document}
\title{$\text{SO}(n)\backslash\text{SO}_0(n,1)$ has positive curvatures}
\author{Taechang Byun}
\address{University of Oklahoma, Norman, OK 73019, U.S.A.}
\curraddr{} \email{tcbyun@math.ou.edu}
\author{Kyeonghee Jo}
\address{Mokpo National Maritime University, Korea}
\curraddr{} \email{khjo@mmu.ac.kr}
\author{Kyung Bai Lee}
\address{University of Oklahoma, Norman, OK 73019, U.S.A.}
\thanks{The second named author is paritally supported by
Mokpo National Maritime University, 2010.}
\curraddr{} \email{kblee@math.ou.edu}

\subjclass[2010]{Primary 53C20,53C15; Secondary 53C12,53C25,53C30}
\keywords{Homogeneous submersion, Warped product, Semi-simple Lie
group, Iwasawa  Decomposition, Sectional curvature, Geodesic}
%
\date{} \maketitle

\begin{abstract}
The Lie group $\text{SO}_0(n,1)$ has the left-invariant metric coming from
the Killing-Cartan form. The maximal compact subgroup $\text{SO}(n)$ of
the isometry group acts from the left. The geometry of the quotient 
space of the homogeneous submersion $\text{SO}_0(n,1)\rightarrow 
\text{SO}(n)\backslash\text{SO}_0(n,1)$ is investigated. 
The space is expressed as a warped product. Its group of isometries and
sectional curvatures are calculated.
\end{abstract}

\setcounter{section}{-1}
\section{Introduction}

On the Lie group  $G=\sono$, we give a left-invariant metric which
comes from the Killing-Cartan form. The maximal compact subgroup
$\son$ $=\son\x\{1\}$ is denoted by $K$.
Then the group of isometries is
$$
\isom_0(G)=G\x K,
$$
the left translation by $G$ and the right translation by $K$. Thus, there
are two actions of $K$, $\ell(K)\subset G$ and $r(K)=K$.
\bigskip

The homogeneous Riemannian submersion by the isometric $r(K)$-action
(which is free and proper)
$$
\son\ra\sono\ra \sono/\son
$$
is very well understood; $\sono/\son$ is the $n$-dimensional hyperbolic space
$\bbh^n$.

It is the purpose of this paper to study the homogeneous Riemannian
submersion by the $\ell(K)$-action
$$
\son\ra\sono\ra \son\bs\sono.
$$

It can be seen that this space $\SP^n=\son\bs\sono$ is diffeomorphic to
$\bbh^n$, but metrically it is not as nice as the case of right
actions. More specifically, it will be shown that
the metric is not conformal to $\bbh^n$, and the space has fewer symmetries.
The following facts will be proven:
\bigskip

1. $\isom_0(\so(n)\bs\so_0(n,1))=r(\so(n))$, and it has one fixed point
$\{\bi\}$, (Theorem \ref{isom-sono}).

2. $\SP^n-\{\bi\}$ is a warped product $(1,\infty)\x_{e^{2\phi}} S^{n-1}$,
(Theorem \ref{warped-prod}).

3. The sectional curvature $\kappa$ satisfies:
$0<\kappa\leq 5$, and $\kappa=5$ is achieved only at $\bi$,
(Theorem \ref{sect-curvature-n}).
\bigskip
\bigskip

\section{Iwasawa  Decomposition}

\step
We shall establish some notation first.
Let
$$
J=\left[\begin{matrix} -I_p & 0\\ 0&I_q \end{matrix}\right],
$$
where $I_p$ and $I_q$ are the identity matrices of size $p$ and $q$.
The group $O(p,q)$ is the subgroup of $\gl(p+q,\bbr)$ satisfying
$A J A^t = J$. It has 4 connected components (for $p,q>0$) and we denote
the connected component of the identity by $\so_0(p,q)$.
It is a semi-simple Lie group. The Iwasawa decomposition is
best described on its Lie algebra. We specialize to $\so_0(n,1)$.

\step
Let $e_{ij}$ denote the matrix whose $(i,j)$-entry is 1 and 0 elsewhere.
The standard metric on $\sono$ is given by the orthonormal basis for the
Lie algebra
$$
E_{ij}=\epsilon_{ij} e_{ij}+e_{ji},\quad 1\leq i < j\leq n+1,
$$
where $\epsilon_{ij}=-1$ if $j<n+1$ and $\epsilon_{ij}=1$ if $j=n+1$.

An Iwasawa decomposition $K A N$ is defined as follows.  Let
$$
N_i=E_{i,n}+E_{i,n+1},\text{ for $i=1,2,\dots,n-1$,}
$$ be a basis for the nilpotent Lie algebra $\mathfrak n$;
$A_1=E_{n,n+1}$ be a basis for the abelian $\frak a$.
The compact subalgebra $\frak k=\frak{so}(n)$ is sitting in
$\frak{so}(n+1)$ as blocked diagonal matrices $\frak{so}(n)\oplus (0)$.
For an explicit discussion of such a decomposition using positive roots, 
see, for example, \cite{Kn}.
\bigskip

\step
It is well known that $NA(=AN)$ forms a (solvable) subgroup. As a
Riemannian \emph{subspace}, $NA$ is an Einstein space; i.e., has a Ricci
tensor which is proportional to the metric.
However, our concern here is $NA$, not as
a subspace, but rather as a \emph{quotient space} of $G$ because it
provides a smooth cross-section for both $G\lra G/K$ and  $G\lra K\bs
G$.
\bigskip

\step
From now on, in a slight abuse of notation,
`$r(K)$-action' means the
right action of $K= \so(n)$ on either $\so_0 (n,1)$ or $\SP^n$
under appropriate situations. Also `$\ell(K)$-action' means the left
action of $K= \so(n)$ on either $\so_0 (n,1)$ or $\bbh^n$.
Note that $\SP^n$ (respectively, $\bbh^n$) does not have an
$\ell(K)$-action (respectively, $r(K)$-action).
\bigskip

\section{$\SP^2=\so(2)\bs\so_0(2,1)$} 
\label{sec:SP^2}

\step[{Metric on $\so_0(2,1)$}]
We shall study the case when $n=2$ first, because this is the building
block for the general case.
The orthonormal basis for the Lie algebra $\frak{so}(2,1)$ is
$$
E_{13}=
\left[\begin{matrix}
0 &0 &1\\
0 &0 &0\\
1 &0 &0\\
\end{matrix}\right],\quad
E_{23}=
\left[\begin{matrix}
0 &0 &0\\
0 &0 &1\\
0 &1 &0\\
\end{matrix}\right],\quad
E_{12}=
\left[\begin{matrix}
0 &-1 &0\\
1 &0 &0\\
0 &0 &0\\
\end{matrix}\right].
$$
The Lie algebras for the Iwasawa decomposition are
$$
\frak{k}=\angles{E_{12}},\quad
\frak{a}=\angles{A_{1}}\quad\text{and}\ \
\frak{n}=\angles{N_{1}},
$$
where
$$
A_1=E_{23}\quad\text{and}\ \
N_1=E_{13}+E_{12}.
$$
The corresponding Lie subgroups are denoted by $K$, $A$ and $N$, respectively.
\bigskip

\step[Global trivialization of $\SP^2$]
In order to study {$\sot\bs\soto$}, it is advantageous to use the
notation $\soto=NAK$ rather than $KAN$. That is, every element $p$
of $\soto$ is uniquely written as a product
$$
p=n a k,\quad n\in N,\ a\in A,\ k\in K.
$$
The nilpotent subgroup $N$ is normalized by $A$, and $NA$ forms a subgroup.
We give a global coordinate to $NA$ by
\begin{align}
\label{varphi}
\varphi:\ \bbr\x \bbr^+ &\lra NA\\
\notag (x,y)&\ \mapsto\  e^{x N_1} e^{\ln(y) A_{1}}.
\end{align}

Note that this is different from the restriction of the exponential map
$\exp: \asoto\ra\soto$.
Sometimes we shall suppress $\varphi$ and write $(x,y)$ for $\varphi(x,y)$.
\bigskip

\step[Comparison with $\sltr$]
We use the standard isomorphism of Lie algebras $\frak{sl}(2,\bbr)$ and
$\frak{so}(2,1)$, sending the basis
$$
\left[\begin{matrix}
0 &1\\
0 &0
\end{matrix}\right],\quad
\tfrac12
\left[\begin{matrix}
1 &0\\
0 &-1
\end{matrix}\right],\quad
\tfrac12
\left[\begin{matrix}
0 &-1\\
1 &0
\end{matrix}\right]
$$
to the basis
$$
N_1,\quad A_{1},\quad -E_{12}.
$$
With the above identification $\varphi$ in diagram (\ref{varphi}), we see the
following correspondence:
$$
\bbr\x\bbr^+ \ni
(x,y)
\longleftrightarrow
\left[\begin{matrix}
1 &x\\
0 &1
\end{matrix}\right]
\left[\begin{matrix}
\sqrt{y} &0\\
0 &\tfrac{1}{\sqrt{y}}
\end{matrix}\right]
\longleftrightarrow
e^{x N_1} e^{\ln(y) A_{1}}
\in NA.
$$
For the compact subgroup $\so(2)$, this isomorphism yields a 2-to-1
covering transformation
$$
\left[\begin{matrix}
\cos \tfrac z2 &-\sin \tfrac z2\\
\sin \tfrac z2 &\phm\cos \tfrac z2
\end{matrix}\right]
\longleftrightarrow
\left[\begin{matrix}
\phm\cos z &\sin z &0\\
-\sin z &\cos z&0\\
0&0&1
\end{matrix}\right]
=e^{z(-E_{12})}.
$$
Therefore, in order to conform with the ordinary M\"obius
transformations of $\so(2)$ on the upper half-plane model, the group
$\so(2)\subset\so_0(2,1)$ will be parametrized by $e^{z (-E_{12})}$
rather than by $e^{z E_{12}}$.

\step[Riemannian metric on $\SP^2$]
With the Riemannian metric on $\soto$ induced by the orthonormal basis
$\{E_{13}, E_{23}, E_{12}\}$,
the group of isometries is
$$
\isom_0(\soto)=\soto\x\sot.
$$
The subgroup $\sot\subset\soto$ acts on $\soto$ as left translations,
$\ell(K)$, freely and properly, yielding a submersion. The quotient space
$\sot\bs\soto$ acquires a unique Riemannian metric that makes the projection,
$\proj: \soto\lra\sot\bs\soto$, a Riemannian submersion.
It has a natural smooth (non-metric) cross section $NA$ in $KNA=NAK$.

At any $p\in\soto$, the vector
$\ell(p)_*(E_{ij})$ is just matrix multiplication $p E_{ij}$, and
$$
\{p E_{13}, p E_{23}, p E_{12}\}
$$
is an orthonormal basis at $p$.
The isometric $\ell(K)$-action induces a homogeneous foliation on $\soto$.
The leaf passing through $p$ is $K p$, the orbit containing $p$. Therefore,
the vertical vector is $E_{12} \, p$. We can find a new orthonormal basis
$\{\bv_1,\bv_2,\bv_3\}$, where the last vector $\bv_3$ is the normalized
$E_{12} \, p$. More explicitly, we write $E_{12}p$ as a combination
of the above orthonormal basis:
\begin{align*}
\bu_3&=E_{12}p=g_1(p)\ p E_{13} + g_2(p)\ p E_{23} + g_3(p)\ p E_{12},\\
\intertext{and set}
\bu_1&=-g_3(p)\ p E_{13} + g_1(p)\ p E_{12}.
\end{align*}
Then take the cross product $\bu_3\x \bu_1$ as $\bu_2$. Now normalize
$\{\bu_1,\bu_2,\bu_3\}$ to get $\{\bv_1,\bv_2,\bv_3\}$.

Thus, $\{\bv_1,\bv_2\}$ is an orthonormal basis for the
horizontal distribution to the homogeneous foliation generated by the
$\ell(K)$-action. We want the projection, $\proj: \soto\ra\SP^2=\sot\bs\soto$,
to be an isometry on the horizontal spaces. Since we are using the
global coordinate system
\begin{equation}
\label{trivialization-2}
\xymatrix@C=4.5pc@R=1pc{
\bbr\x\bbr^+\ar[r]^{\varphi}_\cong &NA\ar[r]^{\kern-48pt\proj|_{NA}}_{\kern-48pt \cong} &\SP^2=\sot\bs\soto
}
\end{equation}
on $\SP^2$, we take the projection $T_p(\soto)=T_p(NAK)\ra T_p(NA)$
for $p\in NA$.
Expressing the images of $\bv_1, \bv_2$ by this projection in terms of
$\{\frac{\partial}{\partial x},\frac{\partial}{\partial y}\}$, we get
  \begin{align*}
  \bw_1&=
-\frac{\sqrt{\left(x^2+1\right)^2+y^4}}{\sqrt{2} y} \frac{\partial}{\partial x} \Big|_{(x,y)}
-\frac{\sqrt{2} x \left(x^2+1\right)}{\sqrt{\left(x^2+1\right)^2+y^4}}\
\frac{\partial}{\partial y}\Big|_{(x,y)} \\
  \bw_2&=
y \sqrt{\frac{2 x^2 y^2}{\left(x^2+1\right)^2+y^4}+1}\
\frac{\partial}{\partial y} \Big|_{(x,y)}.
\end{align*}

\bigskip

\begin{proposition}
\label{ON-on-r2}
The Riemannian metric on the quotient of the
Riemannian submersion $\soto\ra \SP^2=\sot\bs\soto$ is given by the
orthonormal basis $\{\bw_1,\bw_2\}$.
\end{proposition}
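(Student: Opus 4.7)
The plan is to verify the proposition as an explicit coordinate computation, invoking the standard theorem on Riemannian submersions: if $\pi\colon M\to N$ is a Riemannian submersion, then $\pi_*$ restricted to the horizontal distribution is an isometry onto its image. Since the construction already produces a horizontal orthonormal frame $\{\bv_1,\bv_2\}$ on $\soto$ at each $p$, the pushforwards $\{\proj_*\bv_1,\proj_*\bv_2\}$ automatically form an orthonormal frame on $\SP^2$ at $\proj(p)$. The content of the proposition is to re-express these pushed-down vectors in the $(x,y)$-coordinates supplied by $\Phi=\proj\circ\varphi\colon\bbr\times\bbr^+\to\SP^2$ and check that the resulting expressions are $\bw_1,\bw_2$.

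First I would verify that the vertical distribution of $\proj$ at $p$ is $\bbr E_{12}p$, the infinitesimal generator of the $\ell(K)$-orbit through $p$. Writing $E_{12}p=g_1\,pE_{13}+g_2\,pE_{23}+g_3\,pE_{12}$ fixes $(g_1,g_2,g_3)$ as the components of $\Ad(p^{-1})E_{12}$ in the basis $\{E_{13},E_{23},E_{12}\}$. By construction $\bu_1$ lies in the $\{pE_{13},pE_{12}\}$-plane and is orthogonal to $\bu_3=E_{12}p$, while $\bu_2=\bu_3\times\bu_1$ is orthogonal to both; after normalization $\{\bv_1,\bv_2\}$ is orthonormal and horizontal. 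Nothing beyond linear algebra is required here.

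Next I would transfer $\bv_1,\bv_2$ to the cross-section $NA$. Because $K\cap NA=\{e\}$ in the Iwasawa decomposition, the $\ell(K)$-orbit of any $p\in NA$ meets $NA$ only at $p$, so $\proj|_{NA}$ is a diffeomorphism and $T_p\soto=T_p(NA)\oplus\bbr E_{12}p$ is a direct sum. Projecting $\bv_i$ along the vertical $\bbr E_{12}p$ onto $T_p(NA)$ gives $\tilde v_i$ with $\proj_*\tilde v_i=\proj_*\bv_i$, and its preimage under $\varphi_*$ at $(x,y)$ is by definition $\bw_i$. Orthonormality of $\{\bw_1,\bw_2\}$ relative to the pulled-back metric $\Phi^* g_{\SP^2}$ then follows at once, since $\Phi_*\bw_i=\proj_*\bv_i$ and $\{\proj_*\bv_1,\proj_*\bv_2\}$ is orthonormal by the submersion property.

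The principal obstacle is the explicit closed-form computation. It breaks into three pieces: (i) computing $\Ad(p^{-1})E_{12}$ for $p=e^{xN_1}e^{\ln y\,A_{1}}$, using the brackets $[A_1,N_1]=N_1$, $[E_{12},E_{13}]=E_{23}$, $[E_{12},E_{23}]=-E_{13}$, $[E_{13},E_{23}]=-E_{12}$ to exponentiate $\ad_{A_1}$ (hyperbolic on the span of $\{E_{12},E_{13}\}$) and $\ad_{N_1}$ (nilpotent of order three); (ii) computing $\varphi_*(\partial/\partial x)$ and $\varphi_*(\partial/\partial y)$ in the left-invariant basis at $p$, where $A_1$ commuting with itself simplifies the $y$-derivative; (iii) solving a small linear system to extract $\bw_i=\varphi_*^{-1}(\tilde v_i)$. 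The factor $(x^2+1)^2+y^4$ in the stated formulas appears naturally from the combination $\|\bu_1\|^2\|\bu_3\|^2=(g_1^2+g_3^2)(g_1^2+g_2^2+g_3^2)$ after simplification. No conceptual difficulty is present beyond organizing this algebra.
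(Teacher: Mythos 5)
Your proposal follows the same route the paper takes (the paper offers no separate proof; the construction preceding the proposition \emph{is} the argument): build the horizontal orthonormal frame $\{\bv_1,\bv_2\}$ from the normalized vertical vector $E_{12}p$, use the direct sum $T_p\soto=T_p(NA)\oplus\bbr E_{12}p$ coming from the Iwasawa decomposition to push the frame onto the cross-section $NA$, and read off $\bw_1,\bw_2$ in the $(x,y)$-coordinates. Your write-up correctly supplies the justification the paper leaves implicit, namely that projecting along the vertical does not change the image under $\proj_*$ and that the submersion property makes the resulting frame orthonormal.
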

The space $\SP^2$ is always assumed to have this  metric.
\bigskip

\step[Subgroup $NA$ with the left-invariant metric]
We mention that the left-invariant metric restricted on the subgroup $NA$
yields a space isometric to the quotient $\bbh^2=\so_0(2,1)/\so(2)$:
The subgroup $NA$ with the Riemannian metric induced from that of
$\so_0(2,1)$ has an orthonormal basis
$\{\frac{1}{\sqrt{2}} N_1, \ A_1\}$
at the identity,
while the quotient $\so_0(2,1)/\so(2)$ is isometric to the Lie group
$NA$ with a new left-invariant metric coming from the orthonormal basis
$\{N_1, \ A_1\}$.
These two are isometric by $(x,y)\mapsto (\sqrt{2} x,y)$,
and have the same constant sectional curvatures $-1$.
Similar statements are true for general $n$.
\bigskip

\step[Global trivialization of $\bbh^2$]
With the same global coordinate system
$$
\xymatrix@C=4.5pc@R=1pc{
\bbr\x\bbr^+\ar[r]^{\varphi}_\cong
&NA\ar[r]^{\kern-48pt\proj|_{NA}}_{\kern-48pt \cong} &\bbh^2=\soto/\so(2),
}
$$
$\bbh^2$
has the orthonormal basis
$\{y\frac{\partial}{\partial x},y\frac{\partial}{\partial y}\}$
(on the plane $\bbr\x\bbr^+$).
Then the projection  $T_p(\soto)=T_p(NAK)\ra T_p(NA)$ for $p\in NA$
is a Riemannian submersion, that is, an isometry on the horizontal spaces.
\bigskip

\step[Special point $(0,1)$]
Note also that the vector fields $\{\bw_1,\bw_2\}$  are globally defined
and smooth (including the point $(0,1)$).  This fact is significant because
we shall use the fact that our space with the point $(0,1)$ removed is a
warped product to calculate curvatures etc. Since the curvature is a smooth
function of the orthonormal basis, the curvatures at the  point $(0,1)$
will simply be the limit of the curvature, $\lim_{(x,y)\ra(0,1)}
\kappa(x,y)$.

\bigskip
\step[$r(K)$-action on $\SP^2$ vs. $\ell(K)$-action on $\bbh^2$]
Observe that $r(K)$ normalizes (in fact, centralizes) the left action
$\ell(K)$, and hence, it induces an isometric action on the quotient
{$K\bs G$}. We need to study this isometric $r(K)$-action in detail.

First we consider the isometric action $\ell(K)$ on the hyperbolic
space $\HY^2=G/K$. For $p\in NA$ and $k\in K$, suppose $k\cdot p=p_1
k_1$. Then $k\cdot (p K)=p_1 K$. That is
$$
\ell(k)\cdot \bar p =\bar p_1 \text{ in } G/K \quad\text{ (if $k\cdot p=p_1 k_1$ for some $k_1$)}.
$$
Now for our $r(K)$-action on $\SP^2=K\bs G$, let $p\in NA$ and $k\in
K$. Suppose $p\cdot k=k_2 p_2$. Then $(K p)\cdot k=K p_2$.
That is
$$
r(k)\cdot \bar p =\bar p_2 \text{ in } K\bs G \quad\text{ (if $p\cdot k=k_2 p_2$ for some $k_2$)}.
$$
\begin{proposition}
\label{r-moebius}
In $xy$-coordinate for $\SP^2=\sot\bs\soto$ {\rm(}upper half-plane{\rm)},
the\ isometric $r(K)$-action on $\SP^2$ is given by:

\noindent For\ $\zh=e^{z(-E_{12})}= \left[\begin{matrix}
\phm\cos z &\sin z &0\\ -\sin z &\cos z &0\\ 0&0&1
\end{matrix}\right]
\in K$ and $(x,y)\in \SP^2$,
\begin{multline*}
r(\zh) \cdot (x,y) =\frac{1}{2y}
\Big(-(-x^2+y^2-1)\sin z+2x y\cos z,\Big.\\
\Big.(-x^2+y^2-1) \cos z+2 x y \sin z+x^2+y^2+1\Big).
\end{multline*}
In vector notation,
$$
r(\hat z)\cdot\left[\begin{matrix} x \\ y \end{matrix}\right]
=\left[\begin{matrix} \cos z &-\sin z \\ \sin z &\phm\cos z \end{matrix}\right]
\left(\left[\begin{matrix} x \\ y \end{matrix}\right]
-\left[\begin{matrix} 0 \\ \tfrac{1+x^2+y^2}{2y} \end{matrix}\right]\right)
+\left[\begin{matrix} 0 \\ \tfrac{1+x^2+y^2}{2y} \end{matrix}\right].
$$
\end{proposition}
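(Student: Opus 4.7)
The plan is to transport the entire computation to $\sltr$ via the Lie algebra isomorphism recalled just above the statement, thereby reducing the $r(K)$-action on $K\bs G$ to a $QR$-type Iwasawa factorization of an explicit $2\times 2$ matrix. Recall the recipe stated just before the proposition: for $p\in NA$ and $k\in K$, one has $r(k)\cdot\bar p=\bar p_2$, where $p\cdot k=k_2\,p_2$ with $k_2\in K$ and $p_2\in NA$. So the entire task is to factor $p\cdot\hat z$ as (element of $K$)$\cdot$(element of $NA$) and read off the $NA$-part.

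Under the identification, $\varphi(x,y)$ becomes the upper triangular matrix
$$
P=\begin{pmatrix}\sqrt{y}&x/\sqrt{y}\\ 0&1/\sqrt{y}\end{pmatrix},
$$
while $\hat z=e^{z(-E_{12})}$ becomes the rotation by angle $z/2$,
$$
R=\begin{pmatrix}\cos(z/2)&-\sin(z/2)\\ \sin(z/2)&\phm\cos(z/2)\end{pmatrix}
$$
(the factor $1/2$ being precisely the $2$-to-$1$ cover already emphasized in the text). Forming $M:=P\cdot R$, the $K\cdot NA$ factorization is exactly the QR decomposition: if $M$ has first column $(a,c)^t$ and second column $(b,d)^t$, one takes $k_2^{-1}$ to be the rotation carrying $(a,c)^t$ onto the positive horizontal axis, and the resulting $p_2=k_2^{-1}M$ is automatically upper triangular with top-left entry $\sqrt{a^2+c^2}$ and top-right entry $(ab+cd)/\sqrt{a^2+c^2}$ (using $\det M=1$). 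Matching with $\varphi(x_2,y_2)$ yields the clean formulas
$$
y_2=a^2+c^2,\qquad x_2=ab+cd.
$$

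Plugging in the entries of $M$ and then collapsing the resulting half-angle expressions via $2\cos^2(z/2)=1+\cos z$, $2\sin^2(z/2)=1-\cos z$, and $2\sin(z/2)\cos(z/2)=\sin z$ produces the first displayed formula of the proposition. The second, vector form follows by a routine algebraic rearrangement of the first; as a sanity check and geometric aside, one may observe that the center $(0,(1+x^2+y^2)/(2y))$ equals $(0,\cosh d)$, where $d$ is the hyperbolic distance from $(x,y)$ to $\bi=(0,1)$ in $\bbh^2$, so the $r(K)$-orbit traces out a Euclidean circle that also happens to be the hyperbolic circle around $\bi$ through $(x,y)$. There is no conceptual obstacle here; the only care required is in tracking the half-angle $z\leftrightarrow z/2$ through the double cover $\sltr\to\soto$ and in the sign/normalization bookkeeping of the QR step.
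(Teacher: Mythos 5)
Your proposal is correct and is essentially the computation the paper intends: the paper states the proposition without proof, relying on exactly the factorization recipe $p\cdot\hat z=k_2p_2$ set up in the preceding paragraph, and your QR reduction $y_2=a^2+c^2$, $x_2=ab+cd$ reproduces both displayed formulas (I checked the half-angle collapse and the vector form). Carrying the calculation out in $\sltr$ via the $2$-to-$1$ correspondence the paper itself records, rather than with $3\times3$ matrices in $\soto$, is a harmless and slightly cleaner way to execute the same argument, and your handling of the half-angle $z\leftrightarrow z/2$ and of the positivity of the diagonal of $p_2$ (so that $p_2\in NA$) is sound.
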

\bigskip

\step
Note that $r(\hat z)$ is a ``Euclidean rotation'' with an appropriate center.
More precisely, each $(x,y)$ is on the Euclidean circle centered at
$\left(0,\tfrac{1+x^2+y^2}{2y}\right)$ with radius
$\sqrt{x^2+\left(y-\tfrac{1+x^2+y^2}{2y}\right)^2}$, and $r(\hat z)$ rotates the
point $(x,y)$ along this circle. This can be seen by calculations.

The $\ell(K)$-action on $\bbh^2$ is the genuine M\"obius
transformation, and is given by
$$
\hskip-80pt
\ell(\zh) \cdot (x,y)=
\frac{1}{L}
\Big(\left(x^2+y^2-1\right) \sin z+2 x \cos z,\Big.\\
\Big.{2 y}\Big)
$$
with
$$
L={-\left(x^2+y^2-1\right) \cos z+2 x \sin z+x^2+y^2+1}.
$$
The relation between $r(K)$-action on $\SP^2$ and $\ell(K)$-action on
$\bbh^2$ will be stated in Proposition \ref{tau2} more clearly.

\bigskip

\step
Both $r(K)$- and $\ell(K)$-actions have a unique fixed point at
$(0,1)$, and all the other orbits are Euclidean circles
{centered on the $y$-axis}. This implies that the geometry is
completely determined by the geometry
at the points on the $y$-axis (more economically,
on the subset $[1,\infty)$ of the $y$-axis). The orthonormal bases at the points
of $y$-axis are important. From Proposition \ref{ON-on-r2}, we have

\begin{corollary}
\label{onbasis-on-y-axis}
At $(0,y)\in\SP^2$ with $y>1$, the orthonormal system is
\begin{align*}
\bw_1&=-\sqrt{\cosh(2\ln y)} \frac{\partial}{\partial x} \Big|_{(0,y)}\\
\bw_2&=y \frac{\partial}{\partial y} \Big|_{(0,y)}.
\end{align*}
\end{corollary}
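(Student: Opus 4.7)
The plan is to simply specialize the general formulas for $\bw_1$ and $\bw_2$ from Proposition \ref{ON-on-r2} to points of the form $(0,y)$ with $y>1$, and then recognize the resulting coefficient in terms of the hyperbolic cosine.

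First I would substitute $x=0$ into the expression
\[
\bw_1 = -\frac{\sqrt{(x^2+1)^2+y^4}}{\sqrt{2}\,y}\,\frac{\partial}{\partial x}\Big|_{(x,y)}
       - \frac{\sqrt{2}\,x(x^2+1)}{\sqrt{(x^2+1)^2+y^4}}\,\frac{\partial}{\partial y}\Big|_{(x,y)}.
\]
The second summand carries an explicit factor of $x$ and therefore drops out, while the first summand reduces to $-\frac{\sqrt{1+y^4}}{\sqrt{2}\,y}\frac{\partial}{\partial x}|_{(0,y)}$. Similarly, inserting $x=0$ into
\[
\bw_2 = y\sqrt{\frac{2x^2y^2}{(x^2+1)^2+y^4}+1}\ \frac{\partial}{\partial y}\Big|_{(x,y)}
\]
makes the fraction inside the square root vanish, leaving $\bw_2 = y\,\frac{\partial}{\partial y}|_{(0,y)}$, which is exactly the stated formula.

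The only remaining task is to convert the coefficient $\frac{\sqrt{1+y^4}}{\sqrt{2}\,y}$ into $\sqrt{\cosh(2\ln y)}$. This follows from the elementary identity
\[
\cosh(2\ln y) = \frac{e^{2\ln y}+e^{-2\ln y}}{2} = \frac{y^2+y^{-2}}{2} = \frac{y^4+1}{2y^2},
\]
so that $\sqrt{\cosh(2\ln y)} = \frac{\sqrt{y^4+1}}{\sqrt{2}\,y}$, matching the coefficient of $\frac{\partial}{\partial x}$ in $\bw_1$. Since there is no genuine obstacle here beyond this bookkeeping, the corollary follows immediately from Proposition \ref{ON-on-r2}; the only reason to highlight it is that the $y$-axis form of the orthonormal frame is what will be used repeatedly in the curvature computations to come.
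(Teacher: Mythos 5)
Your proposal is correct and is exactly the computation the paper intends: the paper derives the corollary directly from Proposition \ref{ON-on-r2} by setting $x=0$, and your verification of the identity $\cosh(2\ln y)=\tfrac{y^4+1}{2y^2}$ supplies the only nontrivial bookkeeping step. Nothing further is needed.
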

\bigskip

With the orthonormal basis on the upper half-plane model given in
Proposition \ref{ON-on-r2}, we can calculate the sectional curvature.

\begin{theorem}
\label{2-dim_sec_curv}
On the space $\SP^2=\sot\bs\soto$, the sectional curvature at $(x,y)$ is
$$
\kappa(x,y)=
\frac{4 y^2 \left(x^4+2 x^2 \left(y^2+1\right)+y^4+3 y^2+1\right)}{\left(x^4+2 x^2   \left(y^2+1\right)+y^4+1\right)^2}.
$$
In particular, $0<\kappa\leq 5$
and the maximum $5$ is attained at the point $(0,1)$.
\end{theorem}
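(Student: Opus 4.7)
Since $\SP^2$ is two-dimensional, the sectional curvature equals the Gaussian curvature $\kappa(x,y)$ of the single tangent plane at $(x,y)$. The plan is (i) to compute $\kappa$ by applying Cartan's structural equations to the orthonormal frame $\{\bw_1,\bw_2\}$ of Proposition \ref{ON-on-r2}, and then (ii) to derive the bounds $0<\kappa\le 5$ from a direct algebraic analysis of the resulting rational function.

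For step (i), write $\bw_1=A\,\partial_x+B\,\partial_y$ and $\bw_2=C\,\partial_y$ with the coefficients $A,B,C$ from Proposition \ref{ON-on-r2}. Because $\bw_2$ has no $\partial_x$-component, the dual coframe is
$$
\theta^1=\frac{1}{A}\,dx,\qquad \theta^2=-\frac{B}{AC}\,dx+\frac{1}{C}\,dy.
$$
The torsion-free equations $d\theta^i+\omega^i_{\ j}\wedge\theta^j=0$, together with antisymmetry $\omega^1_{\ 2}=-\omega^2_{\ 1}$, uniquely determine the Levi-Civita connection form $\omega^1_{\ 2}$; then $\kappa$ is read off from the curvature equation $d\omega^1_{\ 2}=-\kappa\,\theta^1\wedge\theta^2$. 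This is a mechanical but algebraically heavy computation because of the radicals $\sqrt{(x^2+1)^2+y^4}$ in $A,B,C$; after simplification the spurious radical denominators cancel and one arrives at the claimed closed form.

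For step (ii), set $P(x,y)=x^4+2x^2(y^2+1)+y^4+1$, so the denominator of $\kappa$ is $P^2$ and the bracketed factor of the numerator is $P+3y^2$. Introducing $t:=y^2/P$, the curvature takes the compact form
$$
\kappa=\frac{4y^2(P+3y^2)}{P^2}=4t+12t^2.
$$
A direct expansion gives the factorization $5-\kappa=(1-2t)(5+6t)$, and since $t\ge 0$ the factor $5+6t$ is strictly positive. Hence $\kappa\le 5$ is equivalent to $P-2y^2\ge 0$, and
$$
P-2y^2=x^4+2x^2(y^2+1)+(y^2-1)^2
$$
is manifestly a sum of nonnegative terms that vanishes precisely when $x=0$ and $y=1$. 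This proves $\kappa\le 5$ with equality only at $(0,1)$. Positivity is immediate from $\kappa=4t(1+3t)$ and $t>0$.

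The main obstacle is organizational rather than conceptual: the symbolic computation of $\omega^1_{\ 2}$ and $d\omega^1_{\ 2}$ in step (i) is delicate because of the nested radicals, and one must track cancellations carefully to land on the stated compact formula. Once the closed form for $\kappa$ is in hand, the inequality in step (ii) is transparent and reduces to a one-line sum-of-squares identity.
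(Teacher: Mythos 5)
Your proposal is correct and follows essentially the same route as the paper: the paper likewise obtains the formula by direct computation from the orthonormal frame $\{\bw_1,\bw_2\}$ of Proposition \ref{ON-on-r2} (giving no more detail for that step than you do, and only cross-checking $\kappa(0,y)$ on the $y$-axis later via the warped-product structure), so your unexecuted but correctly set-up Cartan computation matches the paper's level of rigor. Your step (ii) --- the substitution $t=y^2/P$, the factorization $5-\kappa=(1-2t)(5+6t)$, and the identity $P-2y^2=x^4+2x^2(y^2+1)+(y^2-1)^2$ --- is a complete and correct proof of the bounds $0<\kappa\le 5$ with equality only at $(0,1)$, and is in fact more explicit than anything the paper offers for that inequality.
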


\step
Note that, because of the isometric $r(K)$-action (see Proposition 
\ref{r-moebius}), it is enough to know the
curvatures at the points on the $y$-axis,
$$
\kappa(0,y)=\frac{4 y^2 (1 + 3 y^2 + y^4)}{(1 + y^4)^2}.
$$

As we shall see in Proposition \ref{tau2}, the $r(K)$-orbits
will be the geometric concentric circles centered at $(0,1)$.
These are Euclidean circles with different
centers, see Proposition \ref{r-moebius}. Over these
$r(K)$-orbits, $\kappa(x,y)$ is constant, of course. In fact, on the
geometric circle of radius $|\ln y|$, the curvature is $\kappa(0,y)$.

Here are graphs of the sectional curvatures.
Figure \ref{Curv-2dim}
shows that $\kappa=5$ is the maximum at $(0,1)$. The level
curves are the geometric circles centered at $(0,1)$ of $\SP^2$.

\begin{figure}[!ht]
\centering
\mbox{\subfigure{\includegraphics[width=2.5in]{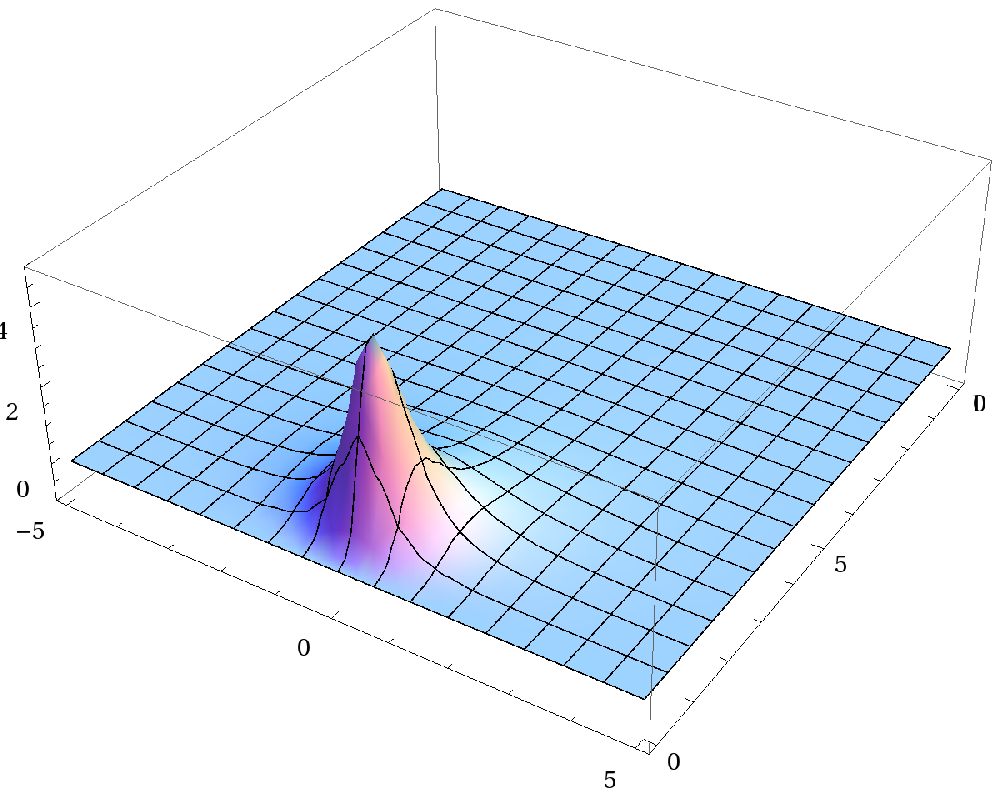}
\label{Curv-2dim}
\quad
\subfigure{\includegraphics[width=2.5in]{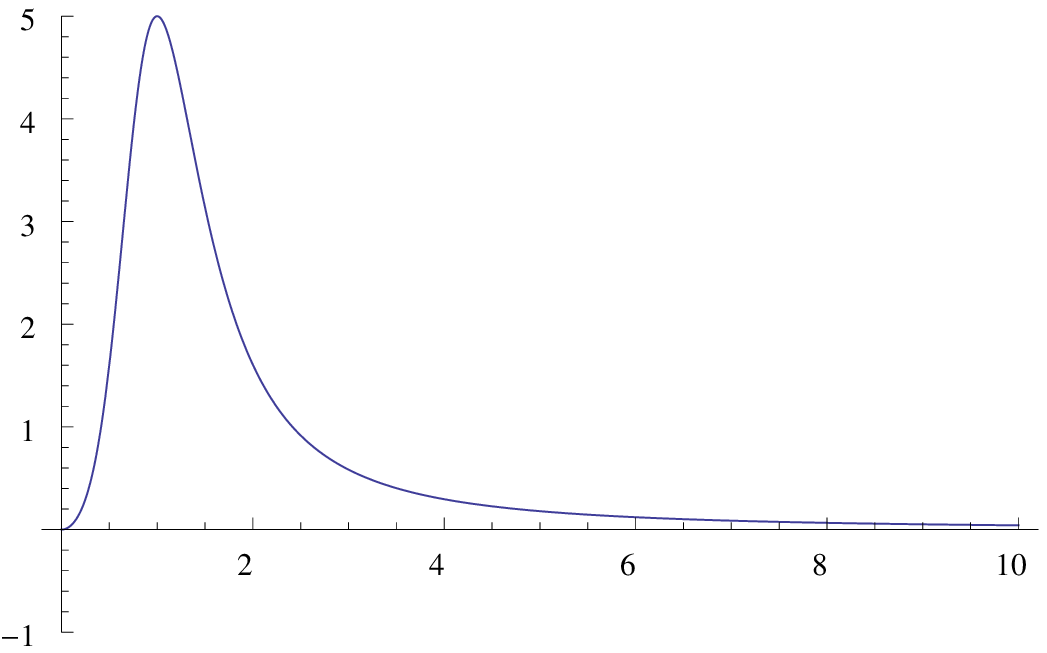} }}}
\caption{$\kappa$ for $(-5<x<5,\ 0<y<10)$, and the cross section at $x=0$}
\end{figure}
\bigskip
\bigskip

\section{$\SP^2=\so(2)\bs\so_0(2,1)$ vs. $\bbh^2=\so_0(2,1)/\so(2)$}

Recall that both spaces $\so(2)\bs\so_0(2,1)$ and $\so_0(2,1)/\so(2)$
have isometric actions by circles, $r(K)$ and $\ell(K)$,
respectively.
The trivialization functions $\varphi$ and $\proj|_{NA}\circ
\varphi$ in diagram (\ref{trivialization-2}) will be suppressed sometimes.

From the weak $G$-equivariant diffeomorphism from $K\bs G$ to $G/K$
given by $K g\mapsto g\inv K$, we can define $\tau:\bbr\x\bbr^+ \lra
\bbr\x\bbr^+$ as in the following

\begin{proposition}
\label{tau2}
For $x\in\bbr^{1}$ and $y\in \bbr^+$,
$(0,y)(x,1)(0,y)\inv =(yx,1)$ 
(with the notation in the diagram {\rm (\ref{trivialization-2})})
so that
$$
(x,y)\inv=(-\tfrac{x}{y},\tfrac{1}{y}).
$$
The map
$$
\tau:\ \SP^2=\so(2)\bs\so_0(2,1)\lra \HY^2=\so_0(2,1)/\so(2)
$$
{\rm(}as a map $\bbr\x\bbr^+ \lra \bbr\x\bbr^+${\rm)}
defined by

$$
\tau(x,y)=(-\tfrac{x}{y},\tfrac{1}{y})
$$
has the following properties:

{\rm (1)}
$\tau$ is a weakly $\so(2)$-equivariant diffeomorphism of period 2.
More precisely,
$$
\tau(r(\zh)\cdot(x,y))=\ell(\zh\inv)\cdot\tau(x,y)
$$
for $\hat{z}\in\so(2)$. In other words, the identification
of $\SP^2$, $\bbh^2$ and  $NA$ with $\bbr \times \bbr^+$ as sets
permits some abuse of $\tau$ and gives the following relation between
$r(K)$-action and $\ell(K)$-action:
$r(\zh)\cdot(x,y)=\tau\left(\ell(\zh\inv)\cdot\tau(x,y)\right)$.

{\rm (2)}
$\tau$ leaves the geometric circles centered at $(0,1)$ in each geometry
invariant. That is, for $m>0$, the Euclidean circle
$$
x^2+(y-\cosh(\ln m))^2=\sinh^2(\ln m)
$$
is a geometric circle centered at $(0,1)$ with radius $|\ln m|$, in both
geometries, and the map $\tau$ maps such a circle to itself.
These circles are $r(K)$-orbits in $\SP^2$ and $\ell(K)$-orbits in $\bbh^2$
(when $\SP^2$ and $\bbh^2$ are identified with $\bbr\x\bbr^+$) at the same time.

{\rm (3)} $\tau$ gives a 1-1 correspondence
between the two sets of all the geodesics passing through $(0,1)$
in the two geometries $\SP^2$ and $\HY^2$. In fact, $\tau$ maps
the $y$-axis to itself and half-circles $\{
(x-\alpha)^2+y^2=\alpha^2+1 \}_{\alpha\in\bbr}$ to hyperbolas $\{
x^2+2\alpha xy-y^2+1=0 \}_{\alpha\in\bbr}$.
\end{proposition}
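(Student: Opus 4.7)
The plan is to define $\tau$ first as the abstract diffeomorphism $K\bs G\to G/K$, $Kg\mapsto g\inv K$, and then to pull this map back to the common coordinate space $\bbr\x\bbr^+$ through the trivialization $\varphi$. The preliminary identities $(0,y)(x,1)(0,y)\inv=(xy,1)$ and $(x,y)\inv=(-x/y,1/y)$ are immediate from the $\sltr$ identification already set up in the previous section: $(0,y)$ and $(x,1)$ correspond to the diagonal and upper-unipotent $2\x2$ matrix generators, so conjugation is a one-line matrix computation, and $(x,y)\inv=(0,1/y)(-x,1)$ is put back into $NA$-form $(x',1)(0,y')$ using the same conjugation.

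Part (1) then follows from the abstract level. The map $Kg\mapsto g\inv K$ is well-defined and an involution (in coordinates, $\tau\circ\tau=\id$ is a direct check), and it intertwines the two $K$-actions because $K(g\hat z)\mapsto(g\hat z)\inv K=\hat z\inv(g\inv K)=\ell(\hat z\inv)(g\inv K)$. Translating to $(x,y)$-coordinates via $(x,y)\leftrightarrow K\varphi(x,y)$ and $\varphi(x,y)\inv=\varphi(-x/y,1/y)$ gives the stated formula. For part (2), I first verify $\tau$-invariance of the circle $x^2+(y-\cosh(\ln m))^2=\sinh^2(\ln m)$ by substituting $(X,Y)=(-x/y,1/y)$ and multiplying through by $y^2$, which returns the original equation. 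That this circle is the $\ell(K)$-orbit of $(0,m)$ in $\bbh^2$ is classical M\"obius geometry; that it is also the $r(K)$-orbit of $(0,m)$ in $\SP^2$ follows by reading off the Euclidean center $(0,(1+m^2)/(2m))=(0,\cosh(\ln m))$ and radius $(m^2-1)/(2m)=\sinh(\ln m)$ directly from Proposition \ref{r-moebius}. The interpretation as a geodesic circle of radius $|\ln m|$ uses two observations: the reflection $(x,y)\mapsto(-x,y)$ is an isometry of $\SP^2$ (the $x$-parities of the coefficients in Proposition \ref{ON-on-r2} are exactly those required for metric-preservation), hence the $y$-axis is totally geodesic; and by Corollary \ref{onbasis-on-y-axis} this geodesic has unit speed in the parametrization $t\mapsto(0,e^t)$, placing $(0,m)$ at $\SP^2$-distance $|\ln m|$ from $(0,1)$. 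The analogous statement in $\bbh^2$ is standard.

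For part (3), $\tau(0,y)=(0,1/y)$ is immediate. The substitution $(X,Y)=(-x/y,1/y)$ converts $(x-\alpha)^2+y^2=\alpha^2+1$ (equivalently $x^2-2\alpha x+y^2=1$) into $X^2+2\alpha XY-Y^2+1=0$ after clearing the denominator $Y^2$. The half-circles together with the $y$-axis exhaust the $\bbh^2$-geodesics through $(0,1)$ by classical hyperbolic geometry. The main conceptual obstacle is showing that the hyperbolas together with the $y$-axis exhaust the $\SP^2$-geodesics through $(0,1)$, since we have no independent computation of the geodesic equation on the non-hyperbolic space $\SP^2$; the trick is to avoid this by using part (1) as an identity of subsets. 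Namely, $\tau$ sends the $\ell(K)$-rotates of the $y$-axis in $\bbh^2$ (the half-circles and the $y$-axis itself) to the $r(K)$-rotates of the $y$-axis in $\SP^2$, which are $\SP^2$-geodesics as isometric images of a geodesic; and these rotates exhaust the geodesics through $(0,1)$ because the Euclidean-rotation formula of Proposition \ref{r-moebius}, combined with the fact that $\{\partial_x,\partial_y\}$ is orthonormal at $(0,1)$, shows that $r(K)$ acts on $T_{(0,1)}\SP^2$ as a full rotation. The remaining effort is bookkeeping among left vs.\ right actions, $\SP^2$ vs.\ $\bbh^2$ coordinates, and group- vs.\ coordinate-inverses; no single step is computationally deep.
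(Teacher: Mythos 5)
Your proposal is correct and, for parts (1) and (3), follows essentially the same route as the paper: the abstract involution $Kg\mapsto g^{-1}K$ pulled back through $\varphi$, the weak equivariance computed at the group level, and the bijection on geodesics obtained from equivariance plus the fact that the $r(K)$-rotates of the $y$-axis exhaust the geodesics through $\bi$ (you make the ``full rotation on $T_{(0,1)}\SP^2$'' point explicit, which the paper only asserts via $\gamma\in\mathcal G_{\SP^2}\cap\mathcal G_{\HY^2}$). The genuine divergence is in part (2), at the one step that actually requires knowing something about the unfamiliar metric on $\SP^2$: that $t\mapsto(0,e^t)$ is a unit-speed geodesic. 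The paper proves this by writing out the full second-order geodesic system in $xy$-coordinates and checking that $(0,e^t)$ satisfies it --- a heavy but self-contained computation. You instead observe that the coefficients of $\bw_1,\bw_2$ in Proposition \ref{ON-on-r2} have exactly the $x$-parities needed for $(x,y)\mapsto(-x,y)$ to be an isometry, so the $y$-axis is the fixed-point set of an isometry, hence totally geodesic, and unit speed then follows from Corollary \ref{onbasis-on-y-axis}. This is a cleaner and less error-prone argument that buys you the same conclusion without the geodesic ODEs (and, as a bonus, it is the same symmetry that generalizes painlessly to $\SP^n$); what the paper's computation buys in exchange is the explicit geodesic equations themselves, which it displays and could in principle reuse elsewhere. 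Both arguments are sound, and your coordinate verifications (the circle invariance under $(X,Y)=(-x/y,1/y)$ and the half-circle-to-hyperbola substitution) match the paper's.
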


\begin{proof}
(1) Observe that $\tau(x,y)$ corresponds to the inverse of $\varphi(x,y)$
in the group $N A$. In fact, we have
$$
\varphi(\tau(x,y))=(\varphi(x,y))\inv.
$$
For $\varphi(x,y)\in N A$ and $\hat z\in K$, one can find $k\in K$ for which
$$
k \cdot \varphi(x,y) \cdot \hat{z} \in NA.
$$
Thus,
\begin{align*}
\varphi\big(\tau(r(\hat z)\cdot (x,y))\big)
&=\big(\varphi(r(\hat z)\cdot (x,y))\big) \inv \\
&=(k \cdot \varphi(x,y)\cdot \hat z\big)\inv\\
&=\hat z\inv\cdot (\varphi(x,y))\inv \cdot k \inv\\
&=\hat z\inv\cdot \big(\varphi (\tau (x,y)) \big) \cdot k \inv\\
&=\varphi\big(\ell(\hat z)\inv\cdot \tau(x,y)\big).
\end{align*}

(2)
 Any $(x,y)$ lies on the Euclidean circle centered at $(0,c)$,
where $c=\tfrac{1+x^2+y^2}{2y}$ and radius
$r=\sqrt{x^2+\left(y-\tfrac{1+x^2+y^2}{2y}\right)^2}$. In particular,
$(0,m)$ lies on the Euclidean circle centered at $(0,c)$, where
$c=\cosh(\ln m)$ and radius $r= | \sinh(\ln m) |$. Note $m=\cosh(\ln
m) + \sinh(\ln m)$ and $\tfrac{1}{m}=\cosh(\ln m) - \sinh(\ln m)$,
which show that both $(0,m)$ and $(0, \tfrac{1}{m})$ lie on the same
circle. Then, in $\bbr \times \bbr^+$,
$$
r(K)\cdot(0,m)=\tau(\ell(K)\cdot(0,\tfrac 1m))=\ell(K)\cdot(0,\tfrac
1m)=\ell(K)\cdot(0,m)
$$
shows this circle is both $r(K)$-orbit of the point $(0,m)$ (in
$\SP^2$) and its $\ell(K)$-orbit (in $\bbh^2$) at the same time.
Since both $r(K)$-action on $\SP^2$ and $\ell(K)$-action on $\bbh^2$
are isometric, every point on the circle has the same distance from
$(0,1)$ in each geometry. 

In $xy$-coordinates, the equations
for geodesics in $\SP^2$ are a system of 2 equations

\small{
\begin{align*}
0=&x''(t){\left(2 x (t)^2 y (t)^3+\left(x (t)^2+1\right)^2 y (t)+y (t)^5\right)^2}
\\
&
-2 y (t) x'(t) y'(t)
\Big(
x (t)^6 \left(4 y (t)^2+2\right)+x (t)^4 \left(6 y (t)^4+8 y
(t)^2\right)\Big) \\
&
-2 y (t) x'(t) y'(t)
\Big(
2 x (t)^2 \left(2 y (t)^6+y (t)^4+2 y (t)^2-1\right)+x (t)^8+y
(t)^8-1
\Big) \\
&-4 x (t) y (t)^2 x'(t)^2\left(x (t)^2+1\right)^2 \\ 
&+ x (t) y'(t)^2
\left(4 \left(x (t)^2+1\right) y (t)^6+2 \left(3 x (t)^4+4 x(t)^2+1\right) 
y (t)^4\right)\\
&+ x (t) y'(t)^2
\left(
4 \left(x (t)^2+1\right)^3 y (t)^2+\left(x (t)^2+1\right)^4+y (t)^8\right)\\
\intertext{and}
0=&
{y (t) \left(2 x (t)^2 \left(y (t)^2+1\right)+x (t)^4+y (t)^4+1\right)^2}
y''(t)\\
&
-4 x (t) y (t) x'(t)y'(t)
\left(
x (t)^4 \left(3 y (t)^2+1\right)+x (t)^2 \left(3 y
(t)^4+4 y (t)^2-1\right)\right) \\
&-4 x (t) y (t) x'(t)y'(t)
\left(
x (t)^6+y (t)^6+y (t)^4+y (t)^2-1\right) 
\\
&
+2 y (t)^2  x'(t)^2
\left(
3 x (t)^4 \left(y (t)^2-1\right)+x (t)^2 \left(y
(t)^2+1\right) \left(3 y (t)^2-5\right)\right)\\
&+2 y (t)^2  x'(t)^2
\left(
x (t)^6+y (t)^6-y (t)^4-y
(t)^2-1
\right)
\\
&+
y'(t)^2
\left(
2 x (t)^6 \left(y (t)^2+1\right)+4 x (t)^4 y (t)^2-2 x (t)^2 \left(y
(t)^6+y (t)^4-y (t)^2+1\right)\right) \\
&+
y'(t)^2
\left(
x (t)^8-\left(y (t)^4+1\right)^2
\right). 
\end{align*}
}
\noindent
One can readily check that
$$
\gamma(t)=(0,e^t)\in\SP^2,\ 0\leq t\leq |\ln (m)|=|\ln (\tfrac{1}{m})|
$$
is a unit-speed geodesic, and therefore,
$\text{Length}(\gamma)=|\ln(m)|$.
This is the geometric radius of the circle centered at $\bi=(0,1)$
$\in\SP^2$.

{(3) Let $\mathcal{G}_{\SP^2}$ and
$\mathcal{G}_{\HY^2}$ be the sets of all the unit-speed geodesics
starting from $\bi$ in $\SP^2$ and $\HY^2$, respectively. Then
$$
\mathcal{G}_{\SP^2}= \{r(k)\cdot \gamma(\bullet): \bbr \lra \SP^2 \}_{k\in
  K}
$$ 
and
$$
\mathcal{G}_{\HY^2}= \{l(k)\cdot \gamma(\bullet) : \bbr \lra \HY^2 \}_{k\in K},
$$
since $\gamma\in \mathcal{G}_{\SP^2}\cap \mathcal{G}_{\HY^2}$.}

The 1-1 correspondence between
$\mathcal{G}_{\SP^2}$ and $\mathcal{G}_{\HY^2}$ by $\tau$ comes
from the weak equivariance of $\tau$ and the fact
$\tau(\gamma(t))=\gamma(-t)$. In fact, for $k\in K$ and $t\in
\bbr$,
$$
r(k)\cdot \gamma(t)=\tau(\ell(k^{-1})\cdot \tau(\gamma(t)))
=\tau(\ell(k^{-1})\cdot\gamma(-t))
=\tau(\ell(k^{-1})\cdot \ell(\hat{\pi})\cdot \gamma(t)).
$$
Finally, we can check easily that for each
$\alpha\in\bbr$, the hyperbola $x^2+2\alpha xy-y^2+1=0 $, a  $\SP^2$-geodesic, 
corresponds to the half-circle $(x-\alpha)^2+y^2=\alpha^2+1$, a
$\HY^2$-geodesic. 
\end{proof}
\bigskip

\begin{theorem} \label{prop:warped-2-dim}
The space $\SP^2-\{\bi\}$ is isometric to the warped
product $B \times_{e^{2 \phi}} S^1$, where $B=(1,\infty)=\{(0,y):\
1<y<\infty\} \subset\SP^2$ has the induced metric; that is,
$|\tfrac{\partial}{\partial t}(t_0)| = \tfrac{1}{t_0}$ for $t_0 \in
(1, \infty)$, $S^1$ has the standard metric; and $e^{2 \phi (t)} =
\frac{\sinh ^2 (\ln t)}{\cosh (2\ln t)}$.
\end{theorem}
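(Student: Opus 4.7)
The plan is to use the isometric $r(K)$-action of Proposition \ref{r-moebius} to trivialize $\SP^2-\{\bi\}$ explicitly as $(1,\infty)\times S^1$. Define
\[
\Psi:(1,\infty)\times S^1\longrightarrow \SP^2-\{\bi\},\qquad \Psi(t,\theta)=r(\hat\theta)\cdot(0,t).
\]
By Proposition \ref{tau2}(2), the $r(K)$-orbits in $\SP^2-\{\bi\}$ are the geometric circles centered at $\bi$, each of which meets the upper $y$-axis $B=\{(0,t):t>1\}$ in a single point. This makes $\Psi$ a bijection, and the derivative computation below shows $d\Psi$ has full rank, so $\Psi$ is a diffeomorphism.

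Next I will show that in these $(t,\theta)$ coordinates the pulled-back metric is $\frac{1}{t^2}\,dt^2+e^{2\phi(t)}\,d\theta^2$ with $e^{2\phi(t)}=\sinh^2(\ln t)/\cosh(2\ln t)$. The key observation is that both $\partial\Psi/\partial t$ and $\partial\Psi/\partial\theta$ at a point $(t,\theta_0)$ are images under the isometry $r(\hat\theta_0)$ of the corresponding vectors at $(t,0)=(0,t)\in B$: for $\partial\Psi/\partial t$ this is immediate from the definition of $\Psi$; for $\partial\Psi/\partial\theta$ it follows because $\partial\Psi/\partial\theta$ is the Killing field $K$ of the one-parameter subgroup $\{r(\hat\theta)\}$, which is invariant under its own flow. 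Hence it suffices to verify at $(0,t)$ that $\partial/\partial y$ and $K$ are orthogonal and to read off their norms; the statement then propagates to all of $\Psi(t,\theta_0)$ because $r(\hat\theta_0)$ preserves inner products.

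From Corollary \ref{onbasis-on-y-axis}, the orthonormal frame at $(0,t)$ is $\{\bw_1,\bw_2\}$ with $\bw_2=t\,\partial/\partial y$ and $\bw_1$ proportional to $\partial/\partial x$; hence $|\partial/\partial y|_{(0,t)}=1/t$ and $\partial/\partial y\perp\partial/\partial x$. Differentiating the formula in Proposition \ref{r-moebius} at $\theta=0$ gives
\[
K(0,t)=\left.\frac{d}{d\theta}\right|_{\theta=0}r(\hat\theta)\cdot(0,t)=\frac{1-t^2}{2t}\,\frac{\partial}{\partial x}\Big|_{(0,t)},
\]
so $K$ is indeed orthogonal to $\partial/\partial y$, and taking norms via Corollary \ref{onbasis-on-y-axis} (noting $(t^2-1)/(2t)=\sinh(\ln t)$),
\[
|K(0,t)|=\frac{t^2-1}{2t\sqrt{\cosh(2\ln t)}}=\frac{\sinh(\ln t)}{\sqrt{\cosh(2\ln t)}}=e^{\phi(t)}.
\]
This produces the warped-product form with precisely the stated warping factor.

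The only real obstacle is organizational: I need to make clear that the equivariance under $r(\hat\theta)$ transports both the orthogonality and the two norms from $B$ to every orbit, so that the pointwise computation at $(0,t)$ suffices globally. Once that is granted, the verification is a single differentiation of the Möbius-type formula of Proposition \ref{r-moebius}, combined with the identity $(t-1/t)/2=\sinh(\ln t)$.
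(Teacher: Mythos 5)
Your proposal is correct and follows essentially the same route as the paper: both define the map via the $r(K)$-action on the ray $\{(0,t):t>1\}$, use equivariance under the isometric circle action to reduce everything to a computation at $\theta=0$, and then differentiate the formula of Proposition \ref{r-moebius} and compare with the orthonormal frame of Corollary \ref{onbasis-on-y-axis} to extract $e^{2\phi(t)}=\sinh^2(\ln t)/\cosh(2\ln t)$. The only differences are cosmetic (you use $r(\hat\theta)$ where the paper uses $r(\hat z^{-1})$, and you phrase the fiber-direction computation in terms of the Killing field rather than $f_*\bigl(e^{-\phi}\tfrac{\partial}{\partial\hat z}\bigr)$, which is the same derivative).
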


\begin{proof}
The crucial points are that $r(K)\subset\isom(\SP^2)$ and
that all the other orbits are circles, except for the one fixed point
$\bi=(0,1)$.
This will make our space a warped product of
$S^1$ by the base space $B$, and we need to find a map $\phi$ in $B
\times_{e^{2 \phi}} {S}^1$. The $r(K)$-orbit through $(0,y)\in\SP^2$
is, by Proposition \ref{r-moebius},
$$
r(\zh)\cdot (0,y)=
\big(-\sinh(\ln y) \sin z,\ \sinh(\ln y)\cos z + \cosh(\ln y)\big).
$$
Define a map
$$
f : B \times_{e^{2 \phi}} {S}^1 \lra \SP^2 =\so(2)\bs\so_0(2,1)
$$
by
\begin{align*}
f(t,\zh)
&=f(t,\zh \cdot \hat{0})\\
&=r(\zh ^{-1})\cdot (0,t)\\
&=r(\widehat{-z}) \cdot (0,t)\\
&=\big(\sinh(\ln t) \sin z,\ \sinh(\ln t)\cos z + \cosh(\ln t)\big).
\end{align*}
Note the definition of $f$ does not depend on $e^{2\phi}$ and it is
weakly equivariant with the $r(\so(2))$-action without the concept of
isometry yet. Since $f$ maps the base  $B\x \hat 0$ of the warped
product to the $y$-axis of $\SP^2$, it is enough to find $e^{2\phi}$
which makes $f$ isometric on $B\x \hat 0$.

Recall that $\SP^2$ has an orthonormal basis
$$
\Big\{
  -\sqrt{\cosh(2\ln t)} \frac{\partial}{\partial x} \Big|_{(0,t)}, \
  t \frac{\partial}{\partial y} \Big|_{(0,t)}
\Big\}
$$
at $f(t,\hat0)=(0,t)$,\ $t>1$, see Corollary
\ref{onbasis-on-y-axis}.
Also note that the metric on $B\x_{e^{2\varphi}} S^1$ is given by the
orthonormal basis
$$
\Big\{t\frac{\partial}{\partial t} \Big|_{(t, \zh)},\
-e^{-\phi (t)} \frac{\partial}{\partial \zh} \Big|_{(t, \zh)} \Big\}
$$
at $(t,\zh)$.
Observe
\begin{align*}
f_* \left(\frac{\partial}{\partial t} \Big|_{(t,\hat 0)}\right)
&=\frac{d (f\circ t)}{d t} \Big|_{(t,\hat 0)}\\
&=\frac{\partial}{\partial t}\big(f(t,\zh)\big)\Big|_{z=0}\\
&=\frac{1}{t}\Big(\cosh(\ln t)\sin z
\frac{\partial}{\partial x} \Big|_{f(t,\zh)}\\
&{\hskip48pt}+\big(\cosh(\ln t)\cos z
 +\sinh(\ln t)\big)
\frac{\partial}{\partial y} \Big|_{f(t,\zh)}
\Big)\Big|_{z=0}\\
&=\frac{\partial}{\partial y} \Big|_{f(t,\hat 0)}\\
&=\frac{\partial}{\partial y} \Big|_{(0,t)}\\
\intertext{and, we have}
f_* \left(t\frac{\partial}{\partial t }\Big|_{(t,\hat 0)}\right) &=
t \frac{\partial}{\partial y} \Big|_{(0,t)}.\\
\end{align*}

\noindent
Thus, if
\begin{align}
\label{warp1}
f_* \left(e^{-\phi (t)} \frac{\partial}{\partial \zh} \Big|_{(t,\hat 0)}\right)
&= - \sqrt{\cosh \big(2 \ln t
\big)} \frac{\partial}{\partial x} \Big|_{(0,t)},\\
\intertext{then $f$ will be an isometry. Now,}      
\label{warp2}
f_* \left(e^{-\phi (t)} \frac{\partial}{\partial \zh} \Big|_{(t,\hat 0)}\right)
&=e^{-\phi (t)} \frac{d (f\circ \zh)}{d \zh} \Big|_{(t,\hat 0)}\\
\notag
&=e^{-\phi (t)}\Big(\sinh(\ln t) \cos z \frac{\partial}{\partial x} \Big|_{f(t,\zh)}\\
\notag
&{\hskip48pt}-\sinh(\ln t)\sin z \frac{\partial}{\partial y} \Big|_{f(t,\zh)}\Big)
\Big|_{z=0}\\
\notag
&=e^{-\phi (t)} \sinh(\ln t)\frac{\partial}{\partial x} \Big|_{(0,t)}.
\end{align}
From the equalities (\ref{warp1}) and (\ref{warp2}), the condition is then
$$
-\sqrt{\cosh(2\ln t)}=e^{-\phi (t)} \sinh(\ln t),
$$
which implies $ e^{2 \phi (t)} = \frac{\sinh ^2 (\ln t)}{\cosh(2\ln
t)}$.
\end{proof}
\bigskip

We calculate $\kappa(0,y)$ again using the warped product. The result
conforms with Theorem \ref{2-dim_sec_curv}.

\begin{corollary}
For $(t,0) \in B \times_{e^{2 \phi}} S^1$,
$$
\kappa(t,0)=\frac{4 t^2 (1 + 3 t^2 + t^4)}{(1 + t^4)^2}.
$$
\end{corollary}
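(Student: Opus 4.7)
The plan is to apply the classical formula for the Gaussian curvature of a two-dimensional warped product. By Theorem \ref{prop:warped-2-dim}, away from $\bi$ the space is isometric to $(1,\infty) \times_{e^{2\phi}} S^1$. In the arc-length coordinate $s = \ln t$ on the base $B = (1,\infty)$, the metric takes the form $ds^2 + h(s)^2 \, d\theta^2$ with warping function $h(s)$ satisfying
\[
h(s)^2 \;=\; e^{2\phi(e^s)} \;=\; \frac{\sinh^2 s}{\cosh(2s)}.
\]
For a two-dimensional metric of this type the sectional (Gaussian) curvature at $(s,\theta)$ depends only on $s$ and is given by the well-known formula $K(s) = -h''(s)/h(s)$, so in particular $\kappa(t,0)$ depends only on $t$, as it must by the $S^1$-homogeneity of the fiber.

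Next I would write $h = e^\phi$ and use the identity $h''/h = \phi'' + (\phi')^2$. From $2\phi = 2\ln \sinh s - \ln \cosh(2s)$ one computes
\[
\phi'(s) = \coth s - \tanh(2s), \qquad \phi''(s) = -\csch^2 s - 2\sech^2(2s).
\]
Substituting these into $K = -(\phi'' + (\phi')^2)$, expanding the square, and using the hyperbolic identity $\coth^2 s - \csch^2 s = 1$ collapses the expression to the intermediate form
\[
K(s) \;=\; -1 + 2\coth s \cdot \tanh(2s) - \tanh^2(2s) + 2\sech^2(2s).
\]

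The final step is to convert back to the variable $t = e^s$ via $\coth s = (t^2+1)/(t^2-1)$, $\tanh(2s) = (t^4-1)/(t^4+1)$, and $\sech(2s) = 2t^2/(t^4+1)$, then to place the four terms over the common denominator $(t^4+1)^2$ and collect. A direct expansion shows the numerator simplifies to $4t^2(1 + 3t^2 + t^4)$, giving the claimed formula. The only real obstacle is the bookkeeping in this last simplification; no conceptual difficulty arises. As an internal consistency check, the result coincides with $\kappa(0,y)\big|_{y=t}$ from Theorem \ref{2-dim_sec_curv}, as it must, since the isometry of Theorem \ref{prop:warped-2-dim} identifies $(t,0) \in B \times_{e^{2\phi}} S^1$ with the point $(0,t) \in \SP^2$ on the $y$-axis.
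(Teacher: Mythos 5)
Your proposal is correct and is essentially the paper's own computation: the paper evaluates $\kappa = -\bigl(h_{\phi}(t\partial_t,t\partial_t)+\langle\nabla\phi,t\partial_t\rangle^{2}\bigr)$ via the Gromoll--Walschap warped-product formulas, which in the arc-length coordinate $s=\ln t$ is exactly your $-\bigl(\phi''+(\phi')^{2}\bigr)=-h''/h$, and both proofs reduce to the same hyperbolic-function simplification with the same values of $\phi'$ and $\phi''$. (Your intermediate form $1+2\coth s\tanh 2s-3\tanh^{2}2s$ is the correct one; the sign on the $\coth\cdot\tanh$ term in the paper's displayed intermediate line appears to be a typo, since only your sign yields the stated final formula.)
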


\begin{proof}
From $ e^{2 \phi (t)} = \frac{\sinh ^2 (\ln t)}{\cosh (2\ln t)}$, we
get
$$
  \{
    t \tfrac{\partial}{\partial t} \! \mid_{(t,0)} ,
    -\tfrac{\sqrt{\cosh (2 \ln t)}}{\sinh (\ln t)}
      \tfrac{\partial}{\partial \hat{z}} \! \mid_{(t,0)}
  \}
$$
is an orthonormal basis at $(t,0) \in B \times_{e^{2 \phi}}
S^1$ and
$$\phi(t) = \ln (\sinh (\ln t)) - \tfrac{1}{2} \ln (\cosh (2 \ln t)).$$
Since $\phi$ is constant along each circle,
\begin{align*}
 \nabla \phi \! \mid_{(t,0)}
 &= \langle \nabla \phi, t
    \tfrac{\partial}{\partial t}\rangle \,
    t \tfrac{\partial}{\partial t}\mid_{(t,0)} \\
 &= (t \tfrac{\partial \phi}{\partial t}) \,
    t \tfrac{\partial}{\partial t} \mid_{(t,0)} \\
 &= (\coth(\ln t) - \tanh (2 \ln t)) \,
    t \tfrac{\partial}{\partial t} \mid_{(t,0)} .
\end{align*}

For tangent vectors $T_1, T_2 \in T(S^1)$ and $X\in T(S^{1})^\perp$
in the warped product, we have
$$
  R(X, T)Y =
  \big(
  h_{\phi} (X,Y) +
  \langle \nabla \phi , X \rangle \langle \nabla \phi , Y \rangle
  \big)
  T
$$
and so
$$
  \langle R(X, T)T, Y \rangle _{\phi}  =
  - e^{2 \phi} |T|^2 _{S^1}
  \big(
  h_{\phi} (X,X) +  \langle \nabla \phi , X \rangle ^2
  \big),
$$
where $h_{\phi}$ is a hessian form, see \cite[p.60,
Proposition2.2.2, Corollary 2.2.1]{GW}. Since
\begin{align*}
  h_{\phi}(t \tfrac{\partial}{\partial t}, t \tfrac{\partial}{\partial t})
  &= \langle
       \nabla _{t \tfrac{\partial}{\partial t}} \nabla \phi, \,
       t \tfrac{\partial}{\partial t}
     \rangle  \\
  &= - \csch^2 (\ln t) - 2 \, \sech ^2 (2 \ln t) \\
  &= - \csch^2 (\ln t) - 2 + 2 \tanh ^2 (2 \ln t),
\end{align*}

\begin{align*}
 \kappa (
         t \tfrac{\partial}{\partial t} ,
         -\tfrac{\sqrt{\cosh (2 \ln t)}}{\sinh (\ln t)}
            \tfrac{\partial}{\partial \hat{z}}
        )
 &= - \big(
        \langle
          \nabla \phi, t \tfrac{\partial}{\partial t}
        \rangle ^2
        +
        h_{\phi}(t \tfrac{\partial}{\partial t}, t \tfrac{\partial}{\partial t})
      \big) \\
 &= 1 - 3 \tanh ^2 (2 \ln t) - 2 \coth (\ln t) \cdot \tanh (2 \ln t) \\
 &= \tfrac{4 t^2 (1 + 3 t^2 + t^4)}{(1 + t^4)^2}.\qedhere
\end{align*}
\end{proof}

\begin{remark}
The following are well known: the space $\HY^2-\{\bi\}$ is isometric
to the warped product $(0,1) \times_{e^{2 \psi}} S^1$, where
$(0,1)\subset\HY^2$ has the induced metric from $\HY^2$, that
is, $|\tfrac{\partial}{\partial t}(t_0)| = \tfrac{1}{t_0}$ for $t_0
\in (0,1)$; $S^1$ has the standard metric; and $e^{2 \psi (t)}
= {\sinh ^2 (\ln t)}$.\\

  The isometry can be given by
  $$
  \tilde{f} :
(0,1) \times_{e^{2 \psi}} {S}^1\lra \HY^2-\{\bi\}
  $$
  defined by
  $$
  \tilde{f}(s,\hat{u}) = \ell(\hat{u}) \cdot (0, s).
  $$
See, for example, \cite[p.58, Theorem 2.2.1]{GW}.
\end{remark}

\begin{corollary}
The map $\tau$ induces a map on the warped products
$$
\tau': (1,\infty) \x_{e^{2\phi}} S^1 \lra (0,1) \x_{e^{2\psi}} S^1
$$
given by
$$\tau' (t,\hat{z}) = (\tfrac{1}{t}, \hat{z}),$$
which is  $\so(2)$-equivariant and satisfies
$\tilde{f} \circ \tau ' = \tau \circ f$.
\end{corollary}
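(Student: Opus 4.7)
The plan is to define $\tau' := \tilde f\inv \circ \tau \circ f$. Since $f$ and $\tilde f$ are diffeomorphisms (by Theorem \ref{prop:warped-2-dim} and the Remark above) and $\tau$ is a diffeomorphism by Proposition \ref{tau2}, $\tau'$ is well-defined, and the asserted identity $\tilde f\circ \tau'=\tau\circ f$ holds tautologically. What remains is to identify $\tau'$ explicitly as $(t,\hat z)\mapsto(1/t,\hat z)$ and to verify its $\so(2)$-equivariance.

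For the explicit formula I would evaluate both sides of $\tilde f\circ \tau'=\tau\circ f$ at an arbitrary point $(t,\hat z)$ and compare. Starting from
\[
f(t,\hat z)=\bigl(\sinh(\ln t)\sin z,\ \sinh(\ln t)\cos z+\cosh(\ln t)\bigr)
\]
given in the proof of Theorem \ref{prop:warped-2-dim} and applying $\tau(x,y)=(-x/y,1/y)$ from Proposition \ref{tau2}, then using the identities $\sinh(\ln t)=(t^2-1)/(2t)$ and $\cosh(\ln t)=(t^2+1)/(2t)$ to clear denominators, one obtains
\[
\tau(f(t,\hat z))=\left(\frac{(1-t^2)\sin z}{(t^2-1)\cos z+t^2+1},\ \frac{2t}{(t^2-1)\cos z+t^2+1}\right).
\]
On the other hand, $\tilde f(1/t,\hat z)=\ell(\hat z)\cdot(0,1/t)$, and substituting $(x,y)=(0,1/t)$ into the M\"obius formula for the $\ell(K)$-action on $\HY^2$ stated just before Proposition \ref{tau2} yields, after clearing a factor of $1/t^2$ from numerator and denominator, the identical expression. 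This forces the closed form $\tau'(t,\hat z)=(1/t,\hat z)$.

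Finally, the $\so(2)$-equivariance follows by transporting Proposition \ref{tau2}(1) across the two warped-product trivializations. By construction, $f(t,\hat z)=r(\hat z\inv)\cdot(0,t)$ intertwines the $S^1$-factor action on $(1,\infty)\x_{e^{2\phi}} S^1$ with the $r(K)$-action on $\SP^2$, and $\tilde f(s,\hat u)=\ell(\hat u)\cdot(0,s)$ does the analogous job for the $\ell(K)$-action on $\HY^2$. Conjugating $\tau\circ r(\hat w)=\ell(\hat w\inv)\circ\tau$ on the left by $\tilde f\inv$ and on the right by $f$ produces the weak $\so(2)$-equivariance for $\tau'$, which is already visible in the closed form $\tau'(t,\hat z)=(1/t,\hat z)$ since the second factor is untouched. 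No single step here is a real obstacle; conceptually the corollary just records the fact that $\tau$, which corresponds to group inversion in $NA$, restricts on the $y$-axis to $(0,y)\mapsto(0,1/y)$, and this restriction is precisely the base-interval part of the trivialization in each warped product.
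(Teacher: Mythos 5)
Your proposal is correct and follows essentially the same route as the paper, which states the corollary without a formal proof but displays a commutative diagram tracing $(t,\hat z)\mapsto r(\widehat{-z})\cdot(0,t)$ under $f$ and $(\tfrac1t,\hat z)\mapsto \ell(\hat z)\cdot(0,\tfrac1t)$ under $\tilde f$ — precisely the evaluation you carry out explicitly (and your algebra checks out: both sides reduce to $\bigl(\tfrac{(1-t^2)\sin z}{(t^2-1)\cos z+t^2+1},\ \tfrac{2t}{(t^2-1)\cos z+t^2+1}\bigr)$). Your added observations — that $\tau'=\tilde f\inv\circ\tau\circ f$ is well defined because all three maps are diffeomorphisms preserving $\bi$, and that strict equivariance of $\tau'$ results from composing the weak equivariances of $f$, $\tau$, and $\tilde f$ — are correct and fill in what the paper leaves implicit.
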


The following commutative diagram shows more detail:
$$
\CD
(1,\infty) \x_{e^{2\phi}} S^1    @>{\tau'}>>
(0,1) \x_{e^{2\psi}} S^1
\\
@V{f}VV @V{\tilde{f}}VV
\\
\SP^2=\so(2)\bs\so_0(2,1) @>\tau>>
\bbh^2=\so_0(2,1)/\so(2)
\endCD
$$
\vspace{0.5cm}
$$
\CD
(t,\hat{z} \cdot \hat{0})=(t,\hat{z})  @>{\tau'}>>
(\tfrac{1}{t},\hat{z})=(\tfrac{1}{t},\hat{z}\cdot\hat{0})
\\
@V{f}VV @V{\tilde{f}}VV
\\
r(\widehat{-z}) \cdot (0,t) @>\tau>>
\ell(\zh) \cdot (-\tfrac{0}{t}, \tfrac{1}{t})
\endCD
$$

\begin{figure}[!ht]
\centering
\includegraphics[width=2.5in]{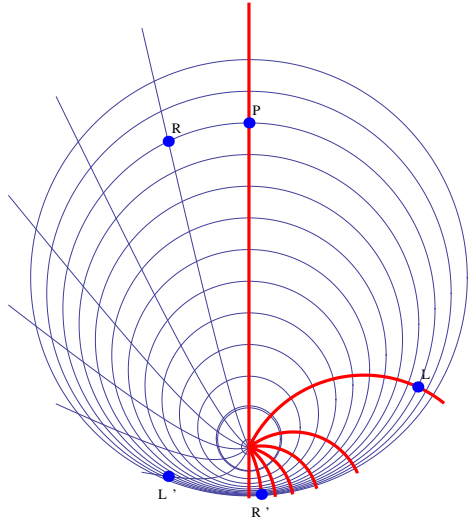}
\caption{Geometric circles and orthogonal geodesics in two geometries.\
$R=   r(\hat{\tfrac{\pi}{7}})\cdot P$,\
$L=\ell(\hat{\tfrac{\pi}{7}})\cdot P$} and $R'=\tau(R)$, $L'=\tau(L)$.
\end{figure}
\bigskip

\section{The general case: $\so(n)\bs\so_0(n,1)$}

\step[Subgroup $NA$ with the left-invariant metric]
As is well known, the subgroup $NA$ has the  structure of a solvable Lie group
$N\rx A$, where
$$
N\cong \bbr^{n-1},\quad A\cong\bbr^+.
$$

The subgroup $NA$ with the Riemannian metric induced from that of
$\so_0(n,1)$ has an orthonormal basis
$$
\{\tfrac{1}{\sqrt{2}} N_1,\ \tfrac{1}{\sqrt{2}}N_2,\ \dots,\
\tfrac{1}{\sqrt{2}}N_{n-1},\ A_{1}\}.
$$
at the identity
while the quotient $\so_0(n,1)/\so(n)$ is isometric to the Lie group
$NA$ with a new left-invariant metric coming from the orthonormal basis
$$
\{N_1,\ N_2,\ \dots,\ N_{n-1},\ A_{1}\}.
$$
These two are isometric by $(\bx,y)\mapsto (\sqrt{2}\bx,y)$,
and have the same constant sectional curvatures $-1$.

\step[Global trivialization of $\bbh^n$]
With the Riemannian metric on $\sono$ induced by the orthonormal basis
$\{E_{ij}:\ 1\leq i < j\leq n+1\}$,
the group of isometries is
$$
\isom_0(\sono)=\sono\x\son.
$$
The subgroup $\son\subset\sono$ acts on $\sono$ as left translations,
$\ell(K)$, freely and properly, yielding a submersion. The quotient space
$\son\bs\sono$ acquires a unique Riemannian metric that makes the projection,
$\proj: \sono\lra\son\bs\sono$, a Riemannian submersion.
It has a natural smooth (non-metric) cross section $NA$ in $KNA=NAK$.

A map
\begin{align*}
\varphi:\ \bbr^{n-1}\x\bbr^+ &\lra NA\\
(\bx,y)&\ \mapsto\  e^{\sum_{i=1}^{n-1}x_i N_i} e^{\ln(y) A_1},
\end{align*}
where $\bx=(x_1,\dots,x_{n-1})$, gives rise to a global
trivialization for the subgroup $NA$ and our space $\SP^n$. Thus, we
shall use $(\bx,y)$ to denote a point in $\SP^n\cong NA$.

\step
Note, for $\bx\in\bbr^{n-1}$ and $y\in \bbr^+$,
$$
(\bzero,y)(\bx,1)(\bzero,y)\inv =(y \bx,1).
$$

Even though we use the local trivialization $\SP^n=\so(n)\bs \sono\ra N A$,
the metric on $\SP^n$ is not related to the group structure of $N A$. That
is, the metric is neither left-invariant nor right-invariant.
\bigskip

\begin{theorem}
\label{isom-sono}
$\isom_0(\so(n)\bs\so_0(n,1))=r(\so(n))$.
\end{theorem}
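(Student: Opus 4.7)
The plan is to establish the two inclusions $r(\son) \subseteq \isom_0(\SP^n)$ and $\isom_0(\SP^n) \subseteq r(\son)$ separately. The first is immediate from the discussion in the introduction: since $\isom_0(G) = G\x\son$, the right $\son$-action is by isometries of $G$; because $r(K)$ commutes with $\ell(K)$, this action descends to an isometric action on the Riemannian quotient $\SP^n = K\bs G$, and connectedness of $\son$ places its image inside the identity component $\isom_0(\SP^n)$.

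For the reverse inclusion I would proceed in two stages. First, I would show that every element of $\isom_0(\SP^n)$ fixes the distinguished point $\bi=[e]$. The key input here is Theorem \ref{sect-curvature-n}, which asserts that the sectional curvature attains its maximum value $5$ only at $\bi$. Since any isometry preserves the sectional-curvature function, the orbit $\isom_0(\SP^n)\cdot\bi$ must lie in the locus where $\kappa=5$, and therefore equals $\{\bi\}$.

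Second, I would analyze the linear isotropy representation
$$
\rho \col \isom_0(\SP^n) \lra \oo(T_\bi \SP^n) \cong \oo(n).
$$
This map is faithful because an isometry of a connected Riemannian manifold is determined by its $1$-jet at any point, and connectedness of $\isom_0(\SP^n)$ then forces $\rho(\isom_0(\SP^n)) \subseteq \son$. It therefore suffices to identify $\rho$ restricted to the subgroup $r(\son)$: under the identification of $T_\bi \SP^n$ with the horizontal subspace $\frak{p} = \on{span}\{E_{1,n+1},\ldots,E_{n,n+1}\} \subset \frak{so}(n,1)$ at the identity, a short computation using the Riemannian-submersion structure (and the fact that the metric is left-invariant) shows that $dr(k)_\bi$ acts as $\Ad(k\inv)|_{\frak{p}}$. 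For $k = \on{diag}(A,1) \in \son$ this is precisely the standard representation of $\son$ on $\bbr^n$, whose image fills out $\son \subset \oo(n)$. Combined with the injectivity of $\rho$, this forces $r(\son) = \isom_0(\SP^n)$.

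The main obstacle is the first stage, which relies entirely on the sharp \emph{uniqueness} of the curvature-maximum point asserted in Theorem \ref{sect-curvature-n}; without it one cannot force an arbitrary isometry to fix $\bi$. The $n=2$ case is already handled by Theorem \ref{2-dim_sec_curv}, but the general $n$ case will demand a genuine computation, most naturally carried out using the warped-product description of Theorem \ref{warped-prod}. Once that uniqueness is in hand, the identification of the isotropy representation and the final conclusion are essentially formal.
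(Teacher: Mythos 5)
Your proposal is correct, and while the easy inclusion $r(\son)\subseteq\isom_0(\SP^n)$ is obtained essentially as in the paper (from the normalizer of $\ell(\son)$ in $\isom_0(\sono)=\ell(\sono)\x r(\son)$), your argument for the reverse inclusion takes a genuinely different route. The paper argues by orbit dimension: if the identity component were strictly larger than $r(\son)$, some orbit would contain an open subset (the $r(\son)$-orbits being already of codimension one), so the sectional curvature would be constant on an open set, which the explicit formula of Theorem \ref{sect-curvature-n} forbids. You instead use the sharper fact that the maximum curvature $5$ is attained only at $\bi$ to force every element of $\isom_0(\SP^n)$ to fix $\bi$, and then invoke faithfulness of the linear isotropy representation at $\bi$ together with the computation $dr(k)_{\bi}=\Ad(k\inv)|_{\frak p}$, whose image already fills out $\son\subset\oo(T_{\bi}\SP^n)$. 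Both arguments lean on Theorem \ref{sect-curvature-n}, and both must note (as the paper does explicitly) that this is not circular because the curvature computation uses only the inclusion $r(\son)\subseteq\isom_0(\SP^n)$; your writeup should record that caveat too. What your version buys: it is the standard isotropy-representation argument, it replaces the paper's somewhat informal ``an orbit contains an open subset'' step with a cleaner fixed-point argument, and it proves as a byproduct the assertion $\Fix(\isom_0(\SP^n))=\{\bi\}$ that the introduction attributes to this theorem but whose proof the paper never isolates. What the paper's version buys: it is shorter and needs only that the curvature is non-constant on open sets, not the strict uniqueness of the maximum point.
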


\begin{proof}
The normalizer of $\ell(\so(n))$ in $\isom_0(\so(n,1))=\ell(\so_0(n,1))\x
r(\so(n))$ is $\ell(\so(n))\x r(\so(n))$.
Since $\ell(\so(n))$ acts ineffectively on the quotient, only $r(\so(n))$
acts effectively on the quotient as isometries.
Thus,
$\isom_0(\so(n)\bs\so_0(n,1))\supset r(\so(n))$.

Suppose these are not equal. Then there exists a point whose orbit contains
an open subset, since the $r(\so(n))$-orbits are already codimension 1.
This implies the sectional curvature is constant on such an open subset.
But this is impossible by Theorem \ref{sect-curvature-n}.
Notice that, for the calculation of the sectional curvature, we only need
the inequality above.
\end{proof}


For $a\in A$ and $k\in\so(n-1)\x\so(1) \subset K=\so(n)$,
$$ak = ka$$
and
$$
(K a)\cdot k = K ka = K a
$$
so that the stabilizer of $r(\so(n))$ at $a=\varphi(\bzero,y),\ y\not=1,\ y
\in \bbr^+$, contains $\so(n-1)\x\so(1)$. 
Let $S$ be the only subgroup of $K=\son$ properly containing
$\so(n-1)\x\so(1)$. Then $\so(n-1)\x\so(1)$ has index 2 in $S$, and no
element of $S-\so(n-1)\x\so(1)$ can fix $a$. Thus, we have

\begin{corollary}
\label{son-stablizer} For the $r(\so(n))$-action on
{$\varphi\inv(NA)=\bbr^{n-1}\x\bbr^+$}, the stabilizer at
$(\bzero,y),\ y\not=1$, is $\so(n-1)\x\so(1)$.
\end{corollary}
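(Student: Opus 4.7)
My plan is to use the paragraph just above the corollary, which already shows $\so(n-1)\times\so(1)\subseteq \on{Stab}_{r(\son)}(\bzero,y)$, and to rule out any larger stabilizer in three steps.

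First, I would enumerate the closed subgroups $H$ with $\so(n-1)\times\so(1)\subseteq H\subseteq \son$. Since $\son$ acts transitively on $S^{n-1}$ with isotropy $\so(n-1)\times\so(1)$, a short dimension count shows $\so(n-1)\times\so(1)$ is maximal among the connected subgroups of $\son$, forcing $H^{0}$ to be either $\so(n-1)\times\so(1)$ or $\son$. In the first case the component group $H/[\so(n-1)\times\so(1)]$ must send the north pole into a finite $\so(n-1)$-invariant subset of $S^{n-1}$, which perforce lies in $\{\pm\text{north pole}\}$; this pins down a unique index-two extension $S$, obtained by adjoining any element swapping the poles (e.g.\ $\on{diag}(-1,1,\ldots,1,-1)\in\son$). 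Thus the stabilizer must be one of $\so(n-1)\times\so(1)$, $S$, or $\son$.

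Next I would eliminate $\son$ on dimensional grounds. The paper has already used (in the proof of Theorem~\ref{isom-sono}) that every $r(\son)$-orbit in $\SP^n$ has codimension one, so the orbit of $(\bzero,y)$ has dimension $n-1=\dim\son-\dim[\so(n-1)\times\so(1)]$. Alternatively, $a=e^{\ln(y)A_{1}}$ does not normalize $K$, since $[\mathfrak{a},\mathfrak{k}]\not\subseteq\mathfrak{k}$, so $r(K)$ cannot fix $Ka$. Either argument forces the identity component of the stabilizer to equal $\so(n-1)\times\so(1)$, excluding $\son$.

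The main step, and the only computational one, is to eliminate $S$. I would pick any coset representative $k\in S\smallsetminus[\so(n-1)\times\so(1)]$, for instance $k=\on{diag}(B',-1,1)\in\sono$ with $B'\in\oo(n-1)$ of determinant $-1$, and test whether $Kak=Ka$; equivalently, whether $aka^{-1}\in K$. Because $a=e^{\ln(y)A_{1}}$ acts as a hyperbolic boost only on the last two coordinates, the conjugation reduces to a short $2\times 2$ computation whose output lies in the corresponding block of $K$ iff its off-diagonal entry $\sinh(2\ln y)$ vanishes, i.e.\ iff $y=1$. I expect this final check to be the main---though routine---obstacle. For $y\neq 1$ it rules out $S$, which combined with the previous two steps forces $\on{Stab}_{r(\son)}(\bzero,y)=\so(n-1)\times\so(1)$.
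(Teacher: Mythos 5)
Your proposal is correct and follows essentially the same route as the paper: establish the containment $\so(n-1)\x\so(1)\subseteq\mathrm{Stab}$, observe that the only subgroups of $\son$ above it are the index-two extension $S$ and $\son$ itself, and rule both out for $y\neq 1$. The only difference is one of detail: the paper merely asserts that no element of $S-\so(n-1)\x\so(1)$ fixes $a$, whereas you supply the explicit $2\times2$ boost-conjugation check (off-diagonal entry $\sinh(2\ln y)\neq 0$), which is a correct and welcome verification of that assertion.
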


This can also be proved from the similar fact on $\bbh^n$ using the
weak $\son$-equivariant map.

\bigskip

\step[Embedding of $\so_0(2,1)$ into $\so_0(n,1)$]
Consider the subgroup $\soto$ of $\sono$, as
$$
I_{n-2}\x\soto\subset\sono,
$$
where $I_{n-2}$ is the identity matrix of size $n-2$.
For $k\in\ell(K)$ and $p\in\soto$, $k\cdot p\in\soto$ if and only if
$k\in\soto$.
Therefore, the space $\ell(\sot)\bs\soto$ is isometrically embedded into
$\ell(K)\bs\sono$.
With this embedding, there is an orthonormal basis for this 2-dimensional
subspace:
\begin{align*}
\bw_{n-1}&=c \ \frac{\partial}{\partial x_{n-1}} \Big|_{(\bzero,y)}\\
\bw_n&=y \frac{\partial}{\partial y} \Big|_{(\bzero,y)}\\
\end{align*}
where {$c=-\sqrt{\cosh(2\ln y)}$.}
\bigskip

\step[Orthonormal basis of $\SP^n$]
The right action of a matrix $k=\exp(\tfrac{\pi}{2}\cdot
E_{j,n-1})\in K$ ($j<n$) maps $(\bx, y)=(x_1,\dots,x_j,\dots,$
$x_{n-1},$ $y)\in\SP^n$ to $(\bx' , y)=(x_{1},\dots,x_{n-1},\dots,
-x_{j},y)\in\SP^n$. (i.e., exchanges the $(n-1)$st and $j$th slot).
More precisely, $\varphi(\bx,y)\cdot k=k'\cdot \varphi(\bx
',y)$ in $\so_0(n,1)$ for some $k'\in \son$. By applying such a
right action on $NA$ for $j=1,2,\dots,n-2$, we get the orthonormal
system at $(0,\dots,0,y)\in\SP^n$ with $y>1$:
\begin{align*}
\bw_1&=c \ \frac{\partial}{\partial x_1} \Big|_{(\bzero,y)}\\
\bw_2&=c \ \frac{\partial}{\partial x_{2}} \Big|_{(\bzero,y)}\\
\bw_3&=c \ \frac{\partial}{\partial x_{3}} \Big|_{(\bzero,y)}\\
&\cdots\\
\bw_{n-1}&=c \ \frac{\partial}{\partial x_{n-1}} \Big|_{(\bzero,y)}\\
\bw_n&=y \ \frac{\partial}{\partial y} \Big|_{(\bzero,y)}\\
\end{align*}
where {$c=-\sqrt{\cosh(2\ln y)}$.}
As before, we denote the upper half-space $\bbr^{n-1}\x\bbr^+$
with this metric by $\SP^n$. The above shows that the
metric is very close to being conformal to the standard $\bbr^n$.
\bigskip

\step
Recall that both spaces $\SP^n=\so(n)\bs\so_0(n,1)$ and
$\bbh^n=\so_0(n,1)/\so(n)$ have isometric actions by the maximal
compact subgroup, $r(K)$ and $\ell(K)$, respectively. The latter has
more isometries, $\ell(\so_0 (n,1))$.

\begin{proposition}
The map
$$
\tau:\ \SP^n\lra \bbh^n
$$
{\rm(}as a map $\bbr^{n-1}\x\bbr^+ \lra \bbr\x\bbr^+${\rm)}
defined by
$$
\tau(\bx,y)=(-\tfrac{\bx}{y},\tfrac{1}{y})
$$
has the following properties:

{\rm (1)}
$\tau$ is a weakly $\so(n)$-equivariant diffeomorphism of period 2.
More precisely,
$$
\tau(r(z)\cdot(\bx,y))=\ell(z\inv)\cdot\tau(\bx,y)
$$
for $z\in\so(n)$.
In other words, the identification of $\SP^n,
\bbh^n, NA, \text{ and } \bbr^{n-1} \times \bbr^+$ as sets permits
the following abuse of $\tau$ and gives a following relation between
$r(K)$-action and $\ell(K)$-action:
$r(\zh)\cdot(x,y)=\tau\left(\ell(\zh\inv)\cdot\tau(x,y)\right)$.

{\rm (2)} $\tau$ leaves the geometric spheres centered at
{$\bi=(\bzero,1)$} in each geometry invariant. That is, in both
geometries, for $m>0$, the Euclidean sphere
$$
|\mathbf x|^2+(y-\cosh(\ln m))^2=\sinh^2(\ln m)
$$
is a geometric sphere centered at $\bi=(\bzero,1)$ with radius $|\ln
m|$, in both geometries, and the map $\tau$ maps such a sphere to
itself. These spheres are $r(K)$-orbits in $\SP^n$ and
$\ell(K)$-orbits in $\bbh^n$ (when $\SP^n$ and $\bbh^n$ are
identified with $\bbr^{n-1}\x\bbr^+$) at the same time.

{\rm (3)} $\tau$ gives a 1-1 correspondence
between the two sets of all the geodesics passing through $\bi$
in the two geometries $\SP^n$ and $\HY^n$. 
\end{proposition}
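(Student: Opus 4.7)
The plan is to mirror the proof of Proposition~\ref{tau2} step by step, using the obvious extension of all ingredients to dimension $n$. The identification that drives the whole argument is $\varphi(\tau(\bx,y))=\varphi(\bx,y)\inv$: since a direct bracket computation gives $[A_1,N_i]=N_i$ and hence $e^{\ln(y)A_1}N_ie^{-\ln(y)A_1}=yN_i$, one finds
\[
\varphi(\bx,y)\inv
=e^{-\ln(y)A_1}e^{-\sum x_iN_i}
=e^{\sum(-x_i/y)N_i}e^{\ln(1/y)A_1}
=\varphi(-\bx/y,1/y),
\]
which already shows that $\tau$ is a smooth involution of $\bbr^{n-1}\x\bbr^+$.

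To prove (1), I factor $\varphi(\bx,y)\cdot\hat z$ uniquely as $k\inv\cdot\varphi(\bx',y')$ with $k\in K$, so that $r(\hat z)\cdot(\bx,y)=(\bx',y')$ in $\SP^n=K\bs G$. Inverting both sides in $NA$ and applying the identity just displayed yields
\[
\varphi\bigl(\tau(r(\hat z)\cdot(\bx,y))\bigr)
=\hat z\inv\cdot\varphi(\tau(\bx,y))\cdot k\inv,
\]
whose image in $G/K=\bbh^n$ is exactly $\ell(\hat z\inv)\cdot\tau(\bx,y)$. This is word-for-word the chain of equalities already carried out in the proof of Proposition~\ref{tau2}, and nothing about replacing the scalar $x$ by the vector $\bx$ alters it.

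For (2), I first observe that the substitution $(\bx',y')=(-\bx/y,1/y)$ divides the quantity $|\bx|^2+y^2+1$ by $y^2$ and carries $2y\cosh(\ln m)$ to $(2/y)\cosh(\ln m)$, so the equation $|\bx|^2+y^2+1=2y\cosh(\ln m)$ is preserved; thus each Euclidean sphere $|\bx|^2+(y-\cosh(\ln m))^2=\sinh^2(\ln m)$ is $\tau$-invariant. To identify such a sphere with an $r(K)$-orbit, I use Corollary~\ref{son-stablizer}: the orbit $r(K)\cdot(\bzero,m)$ has dimension $\dim\so(n)-\dim(\so(n-1)\x\so(1))=n-1$ and sits inside the geometric sphere of radius $|\ln m|$ about $\bi$ since $r(K)$ fixes $\bi$ and acts isometrically. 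Intersecting with the embedded two-dimensional $\SP^2$, Proposition~\ref{tau2}(2) exhibits the intersection as the corresponding Euclidean circle; applying the remaining $r(\so(n-1))$-rotations (which preserve $y$ and rotate $\bx$) sweeps that circle out into the full Euclidean sphere, and dimension matching finishes the identification. The analogous claim for $\ell(K)$ on $\bbh^n$ is classical.

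For (3), the crux is to show that every geodesic of $\SP^n$ through $\bi$ is of the form $r(k)\cdot\gamma$, where $\gamma(t)=(\bzero,e^t)$. First, by Corollary~\ref{son-stablizer} the $y$-axis is contained in the fixed point set of the compact subgroup $r(\so(n-1)\x\so(1))\subset\isom(\SP^n)$, and no $(\bzero,y)$ with $y\ne 1$ is fixed by anything larger; the fixed point set is therefore exactly the $y$-axis, a $1$-dimensional totally geodesic submanifold, hence a geodesic, and arc-length parametrization against $\bw_n=y\,\partial/\partial y$ gives $\gamma(t)=(\bzero,e^t)$, with the evident relation $\tau(\gamma(t))=\gamma(-t)$. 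Next, the isotropy representation of $r(K)\cong\so(n)$ on $T_\bi\SP^n$ has stabilizer at $\bw_n$ equal to $r(\so(n-1)\x\so(1))$ (again by Corollary~\ref{son-stablizer}), so the orbit of $\bw_n$ has dimension $n-1$, filling the unit sphere; hence $\mathcal G_{\SP^n}=\{r(k)\cdot\gamma\}_{k\in K}$, and symmetrically $\mathcal G_{\bbh^n}=\{\ell(k)\cdot\gamma\}_{k\in K}$. Combined with the weak equivariance from~(1),
\[
\tau(r(k)\cdot\gamma(t))=\ell(k\inv)\cdot\tau(\gamma(t))=\ell(k\inv)\cdot\gamma(-t),
\]
yields the desired bijection. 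The real obstacle is this last part: any attempt to repeat the explicit two-dimensional geodesic equations in general $n$ is hopeless, so one has to read the structure of geodesics through $\bi$ off the compact isometry group acting on the fixed point set $\{\bi\}$ and its extension along the $y$-axis.
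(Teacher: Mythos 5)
Your proposal is correct, and it is worth noting that the paper itself states this $n$-dimensional proposition \emph{without} proof, implicitly deferring to the two-dimensional Proposition \ref{tau2}; what you have written is a genuine completion of that omission. For part (1) and the $\tau$-invariance of the Euclidean spheres in part (2) you follow the paper's two-dimensional argument essentially verbatim (the identity $\varphi(\tau(\bx,y))=\varphi(\bx,y)\inv$, justified by $[A_1,N_i]=N_i$, which is consistent with the paper's own formula $(\bzero,y)(\bx,1)(\bzero,y)\inv=(y\bx,1)$), and nothing there depends on the dimension. Where you genuinely diverge is in part (3) and in the identification of the Euclidean spheres with $r(K)$-orbits: the paper's $n=2$ proof verifies that $\gamma(t)=(\bzero,e^t)$ is a geodesic by writing out the explicit geodesic ODEs in the $(x,y)$-chart, which indeed does not generalize; you replace this with the observation that the $y$-axis is the fixed point set of $r(\so(n-1)\x\so(1))$, hence totally geodesic, and that the isotropy representation of $r(\so(n))$ at $\bi$ is transitive on directions. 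This is the right move and is arguably cleaner even for $n=2$. Two small points a careful reader would want spelled out: the inclusion $\mathrm{Stab}(\bw_n)\subseteq\so(n-1)\x\so(1)$ (needed so that the orbit of $\bw_n$ has dimension exactly $n-1$) follows because an isometry fixing $\bi$ and the tangent vector $\bw_n$ fixes the entire geodesic $\gamma$ pointwise and Corollary \ref{son-stablizer} then applies; and ``dimension matching'' in (2) should be phrased as the open--closed argument (the Euclidean sphere is a compact $(n-1)$-manifold contained in the compact connected $(n-1)$-dimensional orbit, hence open and closed in it, hence equal to it). Neither is a gap, only a compression.
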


\step
For the  $\ell(K)$-action on the hyperbolic space $\HY^n=G/K$, we can
take the ray $\{\bzero\}\x (0,1]$ as a cross section to the
$\ell(K)$-action. Clearly, $\{\bzero\}\x [1,\infty)$ is another cross
section. The cross section to the $r(K)$-action on $\SP^n=K\bs G$ is
the ray {$\{\bzero\}\x [1,\infty)$}. The action has a fixed point
$\bi=(\bzero,1)$, and all the other orbits are $\so(n)/\so(n-1) \cong
S^{n-1} \cong \so(n-1) \bs \so(n)$. The geometry of the whole space
$\SP^n=K\bs G$ is completely determined by the geometry on the line
$\{\bzero\}\x [1,\infty)$ as shown below.

\begin{theorem}
\label{warped-prod}
The space $\SP^n-\{\bi\}$ is isometric to the warped product
$(1,\infty) \times_{e^{2 \phi}} S^{n-1}$, where
$(1,\infty)$ has the induced metric from
$\{\bzero\}\x(1,\infty)\subset\SP^n$, that is,
$|\tfrac{\partial}{\partial t}(t_0)| = \tfrac{1}{t_0}$
for $t_0 \in (1, \infty)$;
$S^{n-1}$ has the standard metric;
and $e^{2 \phi (t)} = \frac{\sinh ^2 (\ln t)}{\cosh (2\ln t)}$.
\end{theorem}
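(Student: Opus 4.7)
My plan is to mimic the proof of Theorem \ref{prop:warped-2-dim}, upgrading the $r(\sot)$-orbit structure used there to the $r(\son)$-orbit structure in dimension $n$. By Theorem \ref{isom-sono} the action $r(\son)$ is isometric, fixes the single point $\bi=(\bzero,1)$, and (as in the discussion preceding the theorem) all other orbits are the $(n-1)$-spheres $|\bx|^2+(y-\cosh\ln t)^2=\sinh^2\ln t$ through $(\bzero,t)$ with $t>1$. By Corollary \ref{son-stablizer} the stabilizer at $(\bzero,t)$ is $\so(n-1)\x\so(1)$, so each such orbit is canonically diffeomorphic to $\son/(\so(n-1)\x\so(1))\cong S^{n-1}$.

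I then define the candidate isometry
$$
f:(1,\infty)\x_{e^{2\phi}} S^{n-1}\lra \SP^n-\{\bi\},\qquad f(t,\hat k)=r(\hat k\inv)\cdot(\bzero,t),
$$
where $\hat k$ denotes the class of $k\in\son$ in $\so(n-1)\bs\son$ and $\hat 0\in S^{n-1}$ is the identity coset. By construction $f$ is $r(K)$-equivariant, and by the orbit picture it is a diffeomorphism. It remains to match the metrics.

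The verification splits into radial, tangential, and mixed parts. Radially, the computation is identical to that of Theorem \ref{prop:warped-2-dim}: $f_*(t\partial_t|_{(t,\hat 0)})=t\partial_y|_{(\bzero,t)}$, which is a unit vector in $\SP^n$ by the orthonormal basis $\{\bw_1,\dots,\bw_n\}$ at $(\bzero,t)$ listed just before the theorem. The mixed terms vanish because $r(K)$ acts by isometries and at $(\bzero,t)$ the horizontal vectors $\bw_1,\dots,\bw_{n-1}$ are tangent to the orbit while $\bw_n$ lies along the ray. For a single tangential direction, by $\son$-equivariance I may restrict to the 2-plane determined by the embedded $\so_0(2,1)\hookrightarrow\so_0(n,1)$ acting on the $x_{n-1}y$-slice; on this slice the situation reduces verbatim to the 2-dimensional case, so the same warping factor $e^{2\phi(t)}=\sinh^2(\ln t)/\cosh(2\ln t)$ emerges from Theorem \ref{prop:warped-2-dim}.

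The main obstacle is the \emph{isotropy} of the warping factor, namely showing that one and the same $\phi$ governs every tangential direction. This will be resolved by the $\so(n-1)$-stabilizer at $(\bzero,t)$: it acts by isometries on $\SP^n$ (Theorem \ref{isom-sono}), preserves the orbit through $(\bzero,t)$, and acts transitively on the unit sphere of the horizontal tangent space $\angles{\bw_1,\dots,\bw_{n-1}}$. Consequently the induced metric on every orbit is $\so(n-1)$-invariant and is a positive scalar multiple of the round metric on $S^{n-1}$; the scalar is fixed by the single 2-dimensional computation above, yielding the claimed $\phi$.
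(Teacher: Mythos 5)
Your proposal is correct and follows essentially the same route as the paper: the same equivariant map $f(t,\hat k)=r(\hat k^{-1})\cdot(\bzero,t)$, the same use of Corollary \ref{son-stablizer} to identify the orbits with $S^{n-1}$, and the same reduction to the two-dimensional Theorem \ref{prop:warped-2-dim} via the embedded $\so_0(2,1)$-slice. Your closing paragraph on the isotropy of the warping factor merely makes explicit the step the paper compresses into ``now it is clear that the $\so(n)$-action on both spaces make the weakly equivariant map $f$ a global isometry.''
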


\begin{proof}
The sphere $S^{n-1}\subset\bbr^n$ has a canonical $\so(n)$-action by matrix
multiplication.
Choose the north pole $\bn=(0, \dots, 0,1) \in S^{n-1}$ as a base point.
Then the $\so(n)$-action induces an action on $(1,\infty)\x S^{n-1}$,
acting trivially on the first factor.
The space $\SP^n-\{\bi\}$ also has an (isometric) action by $r(\so(n))$.
Using these actions, we define
$$
f: (1,\infty)\x S^{n-1} \lra \SP^n-\{\bi\}
$$
by
$$
f(t,a\cdot\bn)=f(a\cdot(t,\bn))=r(a\inv)\cdot(\bzero,t),
$$
where $(\bzero,t)\in \bbr^{n-1}\x\bbr^+\subset\SP^n$.
Since both actions have orbits $S^{n-1}$, and the stabilizers at
$(t,\bn)$ and $(\bzero,t)$ are both $\so(n-1)\x\so(1)\subset\so(n)$
(see Corollary \ref{son-stablizer}),
$f$ is well-defined, bijective and smooth.
\bigskip

Consider the subgroup $K_2$,
$$
K_2=I_{n-2}\x\so(2)\x I_1\subset\so(n)\x I_1\subset\so_0 (n,1).
$$
By taking the intersection of $(1,\infty)\x S^{n-1}$ and $\SP^n-\{\bi\}$
with the last 2-dimensional plane, we get isometric embeddings
$$
\CD
(1,\infty)\x S^{n-1} @>>> \SP^n-\{\bi\}\\
@A{\cup}AA @A{\cup}AA\\
(1,\infty)\x S^{1} @>>> \SP^2-\{\bi\}
\endCD
$$
Furthermore, when we give a warped product structure to $(1,\infty)\x S^{1}$
by the function $e^{2 \phi (t)} =\frac{\sinh ^2 (\ln t)}{\cosh (2\ln t)}$,
the restriction of the map $f$,
$$
\CD
(1,\infty)\x_{e^{2\phi}} S^{1} @>>> \SP^2-\{\bi\}
\endCD
$$
$$
f(t,a\cdot\bn)=f(a\cdot(t,\bn))=r(a\inv)\cdot(\bzero,t),
$$
where $(0,t)\in\SP^2\subset\SP^n$, becomes an isometry by Theorem
\ref{prop:warped-2-dim}.
Now it is clear that the $\so(n)$-action on both spaces make
the weakly equivariant map $f$ a global isometry.
Thus, the geometry of $\SP^n$ is completely determined by the geometry on
the cross section $\{\bzero\}\x(1,\infty)\subset\SP^n$
to the $r(K)$-action.
\end{proof}

\step
The sectional curvature of a plane containing the
$(1,\infty)$-direction in $(1,\infty) \times_{e^{2 \phi}} S^{n-1}$
is easy to calculate, since such a plane is a rotation of corresponding
plane for $\sot\bs\soto$ by $\son$.
Thus, the curvature of such a plane is exactly the same as the
2-dimensional case.

\bigskip

\step
For a general plane (not containing the $(1,\infty)$-direction), we
need some work. Notice that $\{f^{-1}_* \bw_1, \, \dots, \, f^{-1}_*
\bw_{n-1}, f^{-1}_* \bw_n\}$ is an orthonormal basis on $(1,\infty)
\times_{e^{2 \phi}} \{\bn\} $  such that $f^{-1}_* \bw_n$ is a normal
vector to each sphere and the others are tangent to the sphere. By
abusing notation, denote $f^{-1}_* \bw_i $ as $\bw_i$ again.

\begin{lemma}
\label{kappa-lemma}
For $(\mathbf 0,y) \in \SP^n-\{(\mathbf 0,1)\}
=(1,\infty) \times_{e^{2 \phi}} S^{n-1}$,  with $y >1,$ and
$
 \bw, \tilde{\bw} \in \mathrm{Span}\{ \bw_1, \dots \bw_{n-1} \} ,
$
with
$|\bw|_\phi = |\tilde{\bw}|_\phi = 1$
and $ \langle \bw, \tilde{\bw} \rangle_\phi = 0$, we have
$$
\kappa (a \bw_n + b \bw, \, c \bw_n + d \tilde{\bw}) =
(a^2 d^2 + b^2 c^2) \kappa(\bw_n , \bw) + b^2 d^2 \, \kappa (\bw, \tilde{\bw}).
$$
\end{lemma}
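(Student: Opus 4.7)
The plan is to expand $\langle R(X, Y)Y, X\rangle$ with $X = a\bw_n + b\bw$ and $Y = c\bw_n + d\tilde{\bw}$ by multilinearity, then apply the warped-product curvature identities (as in \cite[Proposition 2.2.2]{GW}, already invoked in the proof of Theorem \ref{prop:warped-2-dim}) to kill every cross-term.

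The pointwise facts I will use at $(\bzero, y)$, writing $\bw_n$ for the unit horizontal vector tangent to $(1,\infty)$ and $V, W, U$ for vertical vectors tangent to the fiber $S^{n-1}$ in the warped metric, are: $R(V, W)\bw_n = 0$; $R(\bw_n, V)\bw_n$ is vertical and parallel to $V$; $R(\bw_n, V)W$ is horizontal and proportional to $\langle V, W\rangle$; and $R(V, W)U$ is vertical. The rotational isotropy $r(\so(n-1) \times \so(1))$ at $(\bzero, y)$ (Corollary \ref{son-stablizer}) moreover forces $\kappa(\bw_n, V)$ to be independent of the unit vertical $V$, so $\kappa(\bw_n, \tilde{\bw}) = \kappa(\bw_n, \bw)$.

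Using $R(\bw_n, \bw_n) = 0$ and antisymmetry, $R(X, Y)Y$ expands as a linear combination of terms $R(U_1, U_2) U_3$ with $U_i \in \{\bw_n, \bw, \tilde{\bw}\}$. Pairing with $X$ and inspecting each scalar piece, every off-diagonal contribution vanishes: $\langle R(\bw, \tilde{\bw})\bw_n, \bw\rangle = 0$ since $R(\bw, \tilde{\bw})\bw_n = 0$; $\langle R(\bw_n, \tilde{\bw})\tilde{\bw}, \bw\rangle = 0$ since $R(\bw_n, \tilde{\bw})\tilde{\bw}$ is horizontal while $\bw$ is vertical; $\langle R(\bw_n, \bw)\bw_n, \tilde{\bw}\rangle = 0$ since $R(\bw_n, \bw)\bw_n$ is parallel to $\bw$; and the remaining mixed scalars fall by the same three mechanisms together with the pair-symmetry $\langle R(X, Y)Z, W\rangle = \langle R(Z, W)X, Y\rangle$. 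What survives is
$$
\langle R(X, Y)Y, X\rangle = (a^2 d^2 + b^2 c^2)\,\kappa(\bw_n, \bw) + b^2 d^2\,\kappa(\bw, \tilde{\bw}).
$$
Dividing by the Gram determinant $|X|^2 |Y|^2 - \langle X, Y\rangle^2 = (a^2+b^2)(c^2+d^2) - a^2c^2 = a^2 d^2 + b^2 c^2 + b^2 d^2$ (which normalizes to $1$ whenever $X, Y$ form an orthonormal pair, as in the typical applications) yields the stated identity.

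The main obstacle is simply the bookkeeping: verifying that each of the several mixed scalars really does vanish. This rests squarely on the purity statements from the warped-product structure together with the orthonormality of the triple $\{\bw_n, \bw, \tilde{\bw}\}$ in the warped metric; dropping either ingredient would leave behind extra cross-curvatures (e.g.\ $\langle R(\bw_n, \bw)W, V\rangle$ with $\{V, W\}$ a different pair of vertical directions) that would destroy the clean two-term decomposition.
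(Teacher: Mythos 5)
Your proposal is correct and follows essentially the same route as the paper's own proof: expand $\langle R(X,Y)Y,X\rangle$ by multilinearity, kill all cross-terms using the horizontal/vertical purity of the warped-product curvature tensor from \cite[Proposition 2.2.2]{GW}, and use the isometric $r(K)$-rotation to identify $\kappa(\bw_n,\bw)$ with $\kappa(\bw_n,\tilde\bw)$. Your explicit remark about dividing by the Gram determinant $a^2d^2+b^2c^2+b^2d^2$ is a small point of added care (the paper tacitly assumes this normalization, which indeed holds in the one place the lemma is applied), but it does not change the argument.
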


\begin{proof}
For tangent vectors $T_1,T_2,T_3\in T(S^{n-1})$ and $X\in T(S^{n-1})^\perp$
in the warped product, we
have
\begin{align*}
R(T_1, T_2)T_3 &=  R_{S^{n-1}} (T_1, T_2)T_3 -
  e^{2 \phi} \mid \! \nabla \phi \! \mid ^2
  \big( \langle T_2, T_3 \rangle _{S^{n-1}} T_1
  - \langle T_1, T_3 \rangle _{S^{n-1}} T_2 \big),\\
  R(X, T)Y &=  \big(
  h_{\phi} (X,Y) +
  \langle \nabla \phi , X \rangle \langle \nabla \phi , Y \rangle
  \big)
  T,
\end{align*}
see \cite[p.60, Proposition 2.2.2]{GW}. So,
$$
  \langle R (\tilde{\bw}, \bw)\bw, \bw_n \rangle_\phi = 0
\quad\text{and}\quad
  \langle R (\bw, \tilde{\bw}) \tilde{\bw}, \bw_n \rangle_\phi = 0,
$$
also
$$
\langle R(\bw_n,\bw)\bw_n, \tilde{\bw} \rangle _{\phi}
=  e^{2 \phi} \langle \bw, \tilde{\bw} \rangle _{S^1}
  \big(  h_{\phi} (\bw_n,\bw_n) +  \langle \nabla \phi , \bw_n \rangle ^2
  \big)
=0.
$$
Using an isometric $r(K)$-action rotating the $\{ \bw_n ,
\bw\}$-plane to $\{ \bw_n , \tilde{\bw} \}$-plane, we have $
\kappa(\bw_n, \bw) = \kappa(\bw_n, \tilde{\bw})$. Thus,
\begin{align*}
\kappa (a \bw_n + b \bw, \, c \bw_n + d \tilde{\bw})
  &=
   \langle
     R(a \bw_n + b \bw, \, c \bw_n + d \tilde{\bw}) (c \bw_n + d \tilde{\bw}),
     \, a \bw_n + d \bw
   \rangle_\phi  \\
  &= a^2 d^2 \kappa(\bw_n, \tilde{\bw}) + b^2 c^2 \kappa(\bw_n, \bw)
    + b^2 d^2 \kappa(\bw, \tilde{\bw}) \\
  & = (a^2 d^2 + b^2 c^2) \kappa(\bw_n , \bw) + b^2 d^2 \, \kappa (\bw,
\tilde{\bw}).
\qedhere
\end{align*}
\end{proof}

\begin{theorem}[
The sectional curvature of the space $\SP^n=\son\bs\sono$]
\label{sect-curvature-n}
For $(\mathbf 0,y) \in \SP^n-\{(\mathbf 0,1)\}
=(1,\infty) \times_{e^{2 \phi}} S^{n-1}$,  with $y >1$, let $\sigma$ be a
2-dimensional tangent plane at $(\mathbf 0,y)$ whose angle with the
$y$-axis is $\theta$. Then its sectional curvature
$\kappa(y,\theta):=\kappa(\sigma)$ is
$$
\kappa(y,\theta)=
  \cos ^2 \theta \, \frac{4 y^2 (1 + 3 y^2 + y^4)}{(1 + y^4)^2}
  + \sin ^2 \theta \,
    \frac{2 (1+ 2y^2 + 4y^4 + 2y^6 + y^8)}{(1 + y^4)^4}.
$$
This curvature formula is valid for all $1\leq y <\infty$.
Therefore $0<\kappa_{(\bzero,y)}\leq 5$ for all $y\geq 1$, and at $y=1$,
$\kappa(1,\theta)=5$ gives the maximum curvature for all $y\geq 1$.
\end{theorem}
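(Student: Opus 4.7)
The plan is to apply Lemma \ref{kappa-lemma} to split an arbitrary sectional curvature into a ``radial'' and a ``tangential'' contribution, each of which can be evaluated from results already proven. Since the isometric $r(\son)$-action is transitive on sphere-tangent unit directions at $(\bzero, y)$, any 2-plane $\sigma$ making angle $\theta$ with the $y$-axis admits an orthonormal basis of the form $v_1 = \cos\theta\,\bw_n + \sin\theta\,\bw$, $v_2 = \tilde\bw$, with $\bw, \tilde\bw$ orthonormal sphere-tangent vectors. Substituting $(a,b,c,d) = (\cos\theta, \sin\theta, 0, 1)$ into Lemma \ref{kappa-lemma} yields
$$
\kappa(\sigma) = \cos^2\theta \cdot \kappa(\bw_n, \bw) + \sin^2\theta \cdot \kappa(\bw, \tilde\bw),
$$
so it remains to evaluate the two summands separately.

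For $\kappa(\bw_n, \bw)$, the plane spanned by $\bw_n$ and a single sphere-tangent vector lies inside an isometric copy of $\SP^2 \subset \SP^n$, via the embedding $I_{n-2}\x\soto \subset \sono$ introduced just before the orthonormal basis of $\SP^n$ was constructed. This identification respects the warped-product structures, so $\kappa(\bw_n, \bw)$ equals the 2-dimensional curvature at $(0,y)$. By Theorem \ref{2-dim_sec_curv},
$$
\kappa(\bw_n, \bw) = \frac{4y^2(1 + 3y^2 + y^4)}{(1+y^4)^2}.
$$

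For $\kappa(\bw, \tilde\bw)$, both vectors are tangent to the fiber $S^{n-1}$ of the warped product $(1,\infty)\times_{e^{2\phi}} S^{n-1}$ of Theorem \ref{warped-prod}. The standard warped-product curvature identity recorded in the proof of Lemma \ref{kappa-lemma}, specialized to two fiber-tangent orthonormal vectors and to the unit sphere $S^{n-1}$ (constant sectional curvature $1$), gives
$$
\kappa(\bw, \tilde\bw) = e^{-2\phi(y)} - |\nabla\phi|^2_y.
$$
Plugging in $e^{2\phi(y)} = \sinh^2(\ln y)/\cosh(2\ln y)$ together with the gradient $|\nabla\phi|_y = \coth(\ln y) - \tanh(2\ln y)$ computed in the earlier corollary, a routine algebraic simplification reduces this to the rational function of $y$ appearing in the statement.

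Combining the two expressions gives the curvature formula. Smoothness of the orthonormal frame through $\bi = (\bzero, 1)$, noted in the earlier remark, permits extending the formula by continuity to $y = 1$, where both summands evaluate to $5$; this justifies $\kappa(1, \theta) = 5$ for every $\theta$ and establishes the maximum. The bound $0 < \kappa \leq 5$ then reduces to two elementary one-variable inequalities on $[1,\infty)$, one per summand, with equality only at $y = 1$. The main obstacle is not conceptual but algebraic: the simplification of $e^{-2\phi} - |\nabla\phi|^2$ into a clean rational function of $y$, and then the verification of the sharp bound $\kappa \leq 5$ with equality only at $y=1$ for each summand. The structural framework is supplied entirely by Lemma \ref{kappa-lemma}, Theorem \ref{warped-prod}, and Theorem \ref{2-dim_sec_curv}.
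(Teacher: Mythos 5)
Your proposal is correct and follows essentially the same route as the paper: decompose $\kappa(\sigma)$ via Lemma \ref{kappa-lemma} into $\cos^2\theta\,\kappa(\bw_n,\bw)+\sin^2\theta\,\kappa(\bw,\tilde\bw)$, identify the radial term with the two-dimensional curvature of Theorem \ref{2-dim_sec_curv}, compute the fiber term as $e^{-2\phi}-|\nabla\phi|^2$ from the warped-product curvature identity, extend to $y=1$ by continuity, and bound each summand by $5$ separately. (The only cosmetic difference is that the paper obtains the adapted orthonormal basis by orthogonally projecting $\bw_n$ into $\sigma$ rather than by invoking transitivity of the $r(\so(n))$-action; also note the exponent $4$ in the denominator of the second summand in the theorem statement is a typo for $2$, consistent with your computation.)
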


\begin{proof}
It is obvious in the case of either $\theta = 0$ or
$\theta=\tfrac{\pi}{2}.$

Assume $0< \theta < \tfrac{\pi}{2}.$ Let $\hat{\bw}$ be the
orthogonal projection of $\bw_n$ to $\sigma$. There is a unique $\bw
\in T(S^{n-1})$, which lies in the plane $\{ \hat{\bw}, \bw_n \}$,
such that we can write $\hat{\bw}$ as a linear combination of $\bw_n$
and $\bw$ with respect to $\theta$: $\hat{\bw} = r \cos \theta \
\bw_n + r \sin \theta \ \bw$ for some $r> 0.$ Now let $\tilde{\bw}$
be a unit vector in $\sigma \cap T(S^{n-1})$.
Since $\tilde{\bw}, \hat{\bw} \in \sigma$,
$$
  0 = \langle \bw_n, \tilde{\bw} \rangle_\phi
    = \langle \hat{\bw}, \tilde{\bw} \rangle_\phi
    = \langle r \cos \theta \bw_n + r \sin \theta \bw, \, \tilde{\bw} \rangle_\phi
    = r \sin \theta \langle \bw, \tilde{\bw} \rangle_\phi,
$$
which implies
$$\langle \bw, \tilde{\bw} \rangle_\phi = 0$$
and from the above lemma
\begin{align*}
\kappa(y,\theta)
  &= \kappa(\hat{\bw}, \tilde{\bw})  \\
  &= \kappa(\cos \theta \ \bw_n + \sin \theta \ \bw, \, \tilde{\bw}) \\
  &= \cos ^2 \theta \, \kappa(\bw_n, \tilde{\bw})
     + \sin ^2 \theta \, \kappa(\bw, \tilde{\bw}) \\
  &= \cos ^2 \theta \, \kappa (y) + \sin ^2 \theta \, \kappa(\bw, \tilde{\bw}),
\end{align*}
where $\kappa(y)$ is the curvature of any tangent 2-plane containing
$\bw _n$. Now, we get
$$\mid \! \bw \! \mid_{S^{n-1}} = \mid \! \tilde{\bw} \! \mid_{S^{n-1}} = e^{- \phi (y)}$$
with respect to the standard metric on $S^{n-1}$ and, from the
formula of $R(T_1, T_2)T_3$ in the proof of Lemma \ref{kappa-lemma},
\\
\begin{align*}
  \kappa(\bw, \tilde{\bw})
  &= \langle R (\bw, \tilde{\bw})\tilde{\bw}, \bw \rangle_\phi \\
  &= e^{2 \phi (y)}
     \langle R (\bw, \tilde{\bw})\tilde{\bw}, \bw \rangle _{S^{n-1}} \\
  &= e^{2 \phi (y)}
     \big(
        \kappa_{S^{n-1}} (\bw, \tilde{\bw}) -
        e^{2 \phi (y)} \mid \! \nabla \phi \! \mid ^2
        (
          \mid \! \bw \! \mid ^2 _{S^{n-1}}
          \mid \! \tilde{\bw} \! \mid ^2 _{S^{n-1}} -
          \langle \bw , \tilde{\bw} \rangle _{S^{n-1}} ^2
        )
     \big) \\
  &= e^{2 \phi (y)}
     \big(
        e^{-4 \phi (y)} -
        e^{2 \phi(y)}
        \mid \! \nabla \phi  \! \mid  ^2 e^{-4 \phi (y)}
     \big) \\
  &= e^{- 2 \phi (y)} - \langle \nabla \phi , \bw _n \rangle ^2 \\
  &= \tfrac{\cosh (2 \ln y)}{\sinh ^2 (\ln y)} - \big( \bw_n (\phi)\big)^2 \\
  &= \tfrac{2 (y^4 +1)}{(y^2 -1)^2} -
     \big( y \tfrac{\partial \phi}{\partial y} \big)^2 \\
  &= \frac{2 (1+ 2y^2 + 4y^4 + 2y^6 + y^8)}{(1 + y^4)^2}.
\end{align*}
Thus,
$$
\kappa(y,\theta)=
  \cos ^2 \theta \, \frac{4 y^2 (1 + 3 y^2 + y^4)}{(1 + y^4)^2}
  + \sin ^2 \theta \, \frac{2 (1+ 2y^2 + 4y^4 + 2y^6 + y^8)}{(1 + y^4)^2}.
$$
By the remark after Proposition \ref{ON-on-r2}, by the continuity argument,
this curvature formula is valid even at the removed point $(\mathbf 0,1)$
with $\kappa_{(\bzero,1)}=5$.
\bigskip

To estimate the values $\kappa(y,\theta)$, let
\begin{align*}
f(y) &=  \frac{4 y^2 (1 + 3 y^2 + y^4)}{(1 + y^4)^2} \\
g(y) &=  \frac{2 (1+ 2y^2 + 4y^4 + 2y^6 + y^8)}{(1 + y^4)^2}
\end{align*}
for $y > 1$. Then
$$
0 < f(y) < 5 \quad \text{and} \quad 0 < g(y) < 5.
$$
The relation,
$$
\kappa(y,\theta)=
    \cos ^2 \theta \, f(y) + \sin ^2 \theta \, g(y)
    = \frac{f(y) + g(y) + \cos (2 \theta) \big( f(y) - g(y) \big)}{2}
$$
gives us the following inequality
$$
\frac{f(y) + g(y) - \mid \! f(y) - g(y) \! \mid}{2}\leq
\kappa(y,\theta)
\leq \frac{f(y) + g(y) + \mid \! f(y) - g(y) \! \mid}{2},
$$
so that
$$
\mathrm{min}\{f(y), g(y)\} \leq
\kappa(y,\theta)
\leq \mathrm{max}\{f(y), g(y)\},
$$
which shows $0<\kappa(y,\theta)<5$.
\end{proof}

\bigskip
\bigskip

\bibliographystyle{amsalpha}

\bigskip
\bigskip

\end{document}